\newcommand{\setbuilder}[2]{\left\{#1\;\middle\vert\;#2\right\}}
\newcommand{\groupbuilder}[2]{\left\langle#1\;\middle\vert\;#2\right\rangle}
\DeclareMathOperator{\TD}{TD}
\newcommand{\fhi}{\varphi}
\newcommand{\numbersystem}[1]{\mathbb{#1}}
\newcommand{\D}{\numbersystem{D}}
\newcommand{\Cx}{\numbersystem{C}}
\newcommand{\Q}{\numbersystem{Q}}
\newcommand{\Z}{\numbersystem{Z}}
\newcommand{\F}{\numbersystem{F}}
\newcommand{\HQ}{\numbersystem{H}}
\newcommand{\collection}[1]{{\mathcal#1}}
\newcommand{\setT}{\collection{T}}
\newcommand{\setP}{\collection{P}}
\newcommand{\setB}{\collection{B}}
\newcommand{\card}[1]{\lvert#1\rvert}
\newcommand{\vect}[1]{\bm{#1}}
\newcommand{\vx}{\vect{x}}
\newcommand{\vv}{\vect{v}}
\newcommand{\vc}{\vect{c}}
\newcommand{\vy}{\vect{y}}
\newcommand{\vo}{\vect{o}}
\newcommand{\valpha}{\vect{\alpha}}
\newcommand{\vbeta}{\vect{\beta}}
\newcommand{\vgamma}{\vect{\gamma}}
\theoremstyle{plain}
\newtheorem{theorem}{Theorem}
\newtheorem{lemma}[theorem]{Lemma}
\newtheorem{corollary}[theorem]{Corollary}
\newtheorem{proposition}[theorem]{Proposition}
\begin{document}

\title[Embedding a Latin square]{Embedding a Latin square with
  transversal into a projective space}
\thanks{This material is based upon work supported by the 
National Research Foundation. Part of this work was done while Swanepoel was 
at the Department of Decision Science of the University of South Africa as a 
research associate.}
\author{Lou M. Pretorius}
\address{Department of Mathematics and Applied Mathematics,
        University of Pretoria,
        Pretoria 0002, South Africa}
\email{lou.pretorius@up.ac.za}
\author{Konrad J. Swanepoel}
\address{Department of Mathematics,
	London School of Economics and Political Science,
    Houghton Street,
	WC2A 2AE, London, United Kingdom}
\email{k.swanepoel@lse.ac.uk}
\date{\today}

\begin{abstract}
A Latin square of side $n$ defines in a natural way a finite geometry on $3n$ 
points, with three lines of size $n$ and $n^2$ lines of size $3$.
A Latin square of side $n$ with a transversal similarly defines a finite 
geometry on $3n+1$ points, with three lines of size $n$, $n^2-n$ lines of 
size $3$, and $n$ concurrent lines of size $4$.
A collection of $k$ mutually orthogonal Latin squares defines a geometry 
on $kn$ points, with $k$ lines of size $n$ and $n^2$ lines of size $k$.
Extending work of Bruen and Colbourn 
(J. Combin.\ Th.\ Ser.\ A \textbf{92} (2000), 88--94), 
we characterise embeddings of these
finite geometries into projective spaces over skew fields.
\end{abstract}

\maketitle

\section{Introduction}
\subsection{Definitions and notation}
A \emph{Latin square} of side $n\geq 3$ is an $n\times n$ matrix $L=[a_{ij}]$ with
entries from a set $S$ of $n$ symbols such that each symbol appears once
in each row and once in each column.
A \emph{transversal} of a Latin square $[a_{ij}]$ is a selection of
$n$ positions $(i,\sigma(i))$, $i=1,\dots, n$, no two in the same row and no two in
the same column (i.e., $\sigma$ is a permutation), such that all
symbols occur (i.e., ($a_{i,\sigma(i)})_{i=1}^n$ is a permutation of the symbols
 in $S$).
Two Latin squares $L_1=[a_{ij}]$ and $L_2=[b_{ij}]$ are \emph{orthogonal} if the $n^2$ ordered pairs $(a_{ij},b_{ij})$, $1\leq i,j\leq n$, are all distinct.
As usual, we abbreviate the term \emph{mutually orthogonal Latin squares} by MOLS.
See Section III of the Handbook of Combinatorial Designs \cite{Handbook} for a comprehensive survey on Latin squares.

A triple $(V,\setP,\setB)$ is called a
\emph{transversal design} $\TD(k,n)$ of \emph{order} $n\geq 3$ and \emph{block
size} $k\geq 3$ if $V$ is a set of size $kn$, $\setP$ a partition
of $V$
into $k$ subsets of size $n$, each called a \emph{part}, and $\setB$
is a set of
$k$-subsets of $V$, each called a \emph{block}, such that any two distinct 
elements of $V$ are contained in either a unique part or a unique block, but not
both.
This definition agrees with the definition in \cite{BrCol}, except that they use the term group instead of part.
If $X,Y\in V$ are distinct, we denote the unique part or block that contains them by $XY$.
It is well known that a Latin square of side $n$ is equivalent to a $\TD(3,n)$ by letting one part be the set of row indices, the second part the set of column indices, and the third part the set of symbols. 
The blocks are then sets of the form $\{i,j,a_{ij}\}$, where $i$ is a row index and $j$ a column index.
More generally, a collection of $k$ MOLS is equivalent to a $\TD(k+2,n)$ by duplicating the set of symbols $k$ times.
A Latin square with a transversal that has been singled out is equivalent to a $\TD(3,n)$ together with an additional partition $\setT$ of the set $V$ into $n$ pairwise disjoint blocks.

The following binary operation, associated with a $\TD(3,n)$, is fundamental to our discussion.
Let $(V,\setP,\setB)$ be a $\TD(3,n)$ with $\setP=\{P_1,P_2,P_3\}$.
Fix arbitrary points $1_1\in P_1$ and $1_2\in P_2$.
By the definition of a $\TD(3,n)$, $1_1 1_2\cap P_3$ is a singleton, say $\{1_3\}$.
We write $1_3=1_1 1_2\cap P_3$ for short.
Given any $X,Y\in P_1$, let $X'=1_2X\cap P_3$, $Y'=1_3Y\cap P_2$, and finally define $X\odot Y:=X'Y'\cap P_1$.
The equations $A\odot X=B$ and $Y\odot A=B$ both have unique solutions for all $A,B\in P_1$.
Furthermore, $1_1$ is an identity element.
Therefore, $(P_1,\odot)$ is a quasigroup with an identity, i.e., a loop \cite[III.2]{Handbook}, \cite[p.~1]{Szmielew}.

Let $\D$ be a skew field.
Denote its multiplicative group by $\D^*:=\D\setminus\{0\}$.
Let $\D^{d+1}$ denote the $(d+1)$-dimensional vector space of $(d+1)$-tuples of $\D$.
Since $\D$ is not necessarily commutative, there are two ways of multiplying a vector by a scalar.
We choose the convention that $\D^{d+1}$ is a \emph{right} vector space.
Thus for $\vx=(x_1,x_2,\dots,x_{d+1})\in\D^{d+1}$ and $\alpha\in\D$, the scalar multiple $\vx\alpha$ is defined by
\[ (x_1,x_2,\dots,x_{d+1})\alpha := (x_1\alpha,x_2\alpha,\dots,x_{d+1}\alpha).\]
We denote the zero vector by $\vo=(0,0,\dots,0)$.

Let $P^d(\D)$ be the $d$-dimensional projective space over $\D$.
We use homogeneous coordinates $[x_1,\dots,x_{d+1}]$ for a point in $P^d(\D)$, or $[x,y,z]$ when $d=2$.
Note that, since we started off with a right vector space, the homogeneous equation of a $(d-1)$-flat or \emph{hyperplane} in $P^d(\D)$ has the form 
\[\alpha_1x_1+\dots+\alpha_{d+1}x_{d+1}=0,\quad \alpha_i\in\D,\quad \text{not every $\alpha_i$ equals $0$.}\]
A \emph{pencil of hyperplanes} is a collection of all hyperplanes that contain a given $(d-2)$-flat.

An \emph{embedding of the $\TD(k,n)$ $(V,\setP,\setB)$ into $P^d(\D)$} is an injection $\fhi:V\to P^d(\D)$ such that $\fhi(P)$ is contained in a hyperplane $H_P$ of $P^d(\D)$ for each $P\in\setP$, $\fhi(B)$ is contained in a line $\ell_B$ of $P^d(\D)$ for each $B\in\setB$, and such that the hyperplanes $H_P$, $P\in\setP$, are distinct, the lines $\ell_B$, $B\in\setB$, are distinct, and no $\ell_B$ is contained in an $H_P$.
This definition of embedding coincides with the embeddings in \cite{BrCol}.
The requirement that no $\ell_B$ is contained in an $H_P$ ensures that no points in $\fhi(V)$ can lie on $H_P\cap H_Q$, where $P,Q\in \setP$ are distinct.

An \emph{embedding of a Latin square} $L$ into $P^d(\D)$ is an embedding of the associated $\TD(3,n)$.
An \emph{embedding of a Latin square with a transversal} into $P^d(\D)$ is an embedding of the associated $\TD(3,n)$ such that, if the additional partition of $V$ is $\setT=\{B_1,\dots,B_n\}$, then the lines $\ell_{B_1},\dots,\ell_{B_n}$ are concurrent.
This point of concurrency is called a \emph{transversal point} of the embedded Latin square, and will be denoted by $\infty$.
An \emph{embedding of a collection of $k$ MOLS} into $P^d(\D)$ is an embedding of the associated $\TD(k+2,n)$.
In all cases, an embedding is called \emph{proper} if $\fhi(V)$ does not lie on a hyperplane.

\subsection{Overview of the paper}
In this paper we give a full description of embeddings of Latin squares, Latin squares with transversals, and MOLS into Desarguesian projective planes and spaces, that is, projective planes and spaces over a skew field.
Motzkin \cite{MR12:849c} made a first attempt at characterising an
embedding of a Latin square into a projective plane over a field.
A correct description for this case was given by Kelly and Nwankpa \cite[theorems 3.11 and 3.12]{MR47:3207}.
Bruen and Colbourn \cite{BrCol} introduced the above notion of an embedding into $P^d(\D)$ also in the case where $\D$ is a field.
They gave a detailed description for the $2$-dimensional case, and briefly described an extension to higher dimensions \cite[Theorem 5.1]{BrCol}.
We give a complete proof of their higher-dimensional result, generalised to skew fields.
Our extension to skew fields poses only minor algebraic difficulties.
In Section~\ref{section:2d} we state without proof the $2$-dimensional cases of our results.
(They follow from the corresponding higher-dimensional results in Section~\ref{higher-dim}.)
In Section~\ref{section:groups} we discuss the finite groups that arise as subgroups of $\D^*$.
This is a much richer class of groups than the finite subgroups of fields, which are necessarily cyclic.
Section~\ref{alg} contains some algebraic preparation, and finally in Section~\ref{higher-dim} we formulate and prove all our higher-dimensional results.

\section{Embeddings into Desarguesian projective planes}\label{section:2d}
In this section we formulate the planar case of our results without proof.
Although the case where $\D$ is a field is well-known, we could not find the non-commutative versions anywhere in the literature.

In Theorem~\ref{thm1} below we show that if a Latin square with transversal is embedded in a Desarguesian projective plane, then the three parts of the corresponding $\TD(3,n)$ must lie on concurrent lines.
This generalises Case~1 of Theorem~4.1 of Bruen and Colbourn \cite{BrCol} from fields
to skew fields.
Our original motivation for such a generalisation was to show
that the $20$-point geometry obtained from the affine plane
of order $5$ by removing the $5$ points of some line,
can be embedded into $P^2(\D)$ for some skew field $\D$ only if $\D$ has characteristic $5$.
This result is used in the proof of \cite[Lemma~13]{PS}.
It is sufficient to consider the $16$-point geometry in $\F_5^2$ consisting of three parallel lines together with an additional point.
This is an embedding of a Latin square of side $5$ with transversal, and Theorem~\ref{thm1} applies.

Let $(V,\setP,\setB)$ be a $\TD(3,n)$ that is already embedded in $P^2(\D)$, i.e., $V\subseteq P^2(\D)$ and there exist three distinct lines $h_1$, $h_2$, $h_3$ of $P^2(\D)$ such that $\setP=\{V\cap h_1, V\cap h_2, V\cap h_3\}$.
We refer to this situation by saying that $(V,\setP,\setB)$ \emph{lies on the lines} $h_1$, $h_2$, $h_3$.
We now distinguish between whether $h_1$, $h_2$, $h_3$ are concurrent or not.

If the $h_i$ are concurrent, then after choosing $1_1\in h_1\cap V$ and $1_2\in h_2\cap V$, we may choose homogeneous coordinates such that the point of concurrency of the $h_i$ is $[1,0,0]$, $1_1=[0,0,1]$, $1_2=[0,1,1]$ and $1_3=[0,1,0]$.
Then the equation of $h_1$ is $y=0$, of $h_2$ is $y=z$ and of $h_3$ is $z=0$.
The coordinates of the points in $V$ depend on the choices made above.
In the next proposition we describe all possible coordinatisations.
It extends Proposition~10 of our previous paper \cite{PS}.
A further extension is found in Proposition~\ref{prop3}.
\begin{proposition}\label{prop1}
Let $(V,\setP,\setB)$ be a $\TD(3,n)$ which lies on the concurrent lines $h_1$, $ h_2$, $h_3$ of $P^2(\D)$.
If we choose homogeneous coordinates as above, then there exists a subgroup $G$ of $(\D,+)$ of order $n$ such that
\begin{equation}\label{coord1}
\left.
\begin{aligned}
  h_1\cap V&=\setbuilder{[\gamma,0,1]}{\gamma\in G},\\
  h_2\cap V&=\setbuilder{[\gamma,1,1]}{\gamma\in G},\\
  h_3\cap V&=\setbuilder{[-\gamma,1,0]}{\gamma\in G}.
\end{aligned}
\qquad\right\}
\end{equation}
The group $G$ depends only on the choice of coordinates.
For any two such choices, the two groups $G_1$ and $G_2$ so obtained satisfy $G_1=bG_2a$ for some $a,b\in\D^*$.

Conversely, given any subgroup $G$ of $(\D,+)$ of order $n$, \eqref{coord1} gives an embedding of a $\TD(3,n)$ on the concurrent lines $h_1$, $h_2$, $h_3$ with equations $y=0$, $y=z$, $z=0$, respectively.
\end{proposition}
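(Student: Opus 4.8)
The plan is to work directly with the coordinatisation forced by the choices already made, and to extract the subgroup $G$ from the loop structure $(P_1,\odot)$ described in the introduction. Since $h_1$ has equation $y=0$ and passes through the common point $[1,0,0]$ and through $1_1=[0,0,1]$, every point of $h_1\cap V$ other than $[1,0,0]$ has the form $[\gamma,0,1]$ for a unique $\gamma\in\D$; since $[1,0,0]\in h_1$ is the point of concurrency, it lies on $h_1\cap h_2$ and hence, by the remark after the definition of embedding (no $\ell_B\subseteq H_P$), it is not a point of $V$. So $h_1\cap V=\setbuilder{[\gamma,0,1]}{\gamma\in G}$ for some set $G\subseteq\D$ with $0\in G$ (because $1_1=[0,0,1]$), and $\card G=n$. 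First I would compute, using the explicit coordinates of $1_2=[0,1,1]$ and $1_3=[0,1,0]$, the maps $X\mapsto X'=1_2X\cap P_3$ and $Y\mapsto Y'=1_3Y\cap P_2$ appearing in the definition of $\odot$. For $X=[\gamma,0,1]\in h_1$, the line through $[0,1,1]$ and $[\gamma,0,1]$ meets $h_3:z=0$ in a point one computes to be $[-\gamma,1,0]$ (checking the collinearity $1_2X\cap P_3$ via the $2\times 2$ determinant-type condition over $\D$, being careful that scalars multiply on the right); this simultaneously pins down the claimed form of $h_3\cap V$. Likewise for $Y=[\gamma,0,1]$, the line through $1_3=[0,1,0]$ and $Y$ meets $h_2:y=z$ in $[\gamma,1,1]$, pinning down $h_2\cap V$.

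Next I would feed these two formulas into the definition $X\odot Y=X'Y'\cap P_1$. With $X=[\alpha,0,1]$, $Y=[\beta,0,1]$ we get $X'=[-\alpha,1,0]$ and $Y'=[\beta,1,1]$, and the line $X'Y'$ meets $h_1:y=0$ in a point which a short computation identifies as $[\alpha+\beta,0,1]$. Hence $\odot$ is, under the identification $[\gamma,0,1]\leftrightarrow\gamma$, just addition in $\D$ restricted to $G$, with identity $0$ (matching that $1_1=[0,0,1]$ is the loop identity). Since the introduction established that $(P_1,\odot)$ is a loop, $(G,+)$ is closed under addition and contains $0$; the equations $A\odot X=B$ and $Y\odot A=B$ being uniquely solvable in $G$ force closure under additive inverses as well, so $G$ is a finite subgroup of $(\D,+)$ of order $n$. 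This also gives $h_3\cap V=\setbuilder{[-\gamma,1,0]}{\gamma\in G}$ since $\gamma\mapsto-\gamma$ is a bijection of $G$, establishing \eqref{coord1}.

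For the dependence on choices: the coordinate frame is determined by the point of concurrency and the three points $1_1,1_2,1_3$, but any admissible relabelling corresponds to a projective transformation of $P^2(\D)$ fixing the line $h_1$ and the configuration of the three concurrent lines. Such a transformation acts on the affine parameter $\gamma$ of $h_1$ by an affine-type map $\gamma\mapsto b\gamma a + c$ (with $a,b\in\D^*$); since both parametrisations send the chosen identity to $0$ we must have $c=0$, yielding $G_1=bG_2a$. I would spell this out by writing the general change of homogeneous coordinates preserving the three equations $y=0$, $y=z$, $z=0$ up to the allowed freedom, and reading off its effect on the first coordinate; the main care here is again keeping left/right multiplication straight over the skew field. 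Finally, for the converse, given any additive subgroup $G\leq(\D,+)$ of order $n$ I would simply define $V$ by \eqref{coord1}, take $\setP$ to be the three listed sets, and define the blocks to be the collinear triples $\{[\alpha,0,1],[\beta,1,1],[-\gamma,1,0]\}$; one checks from the determinant condition that such a triple is collinear exactly when $\gamma=\alpha+\beta$, so there is exactly one block through each pair of points in distinct parts, the lines $\ell_B$ are distinct and not contained in any $h_i$ (as none passes through $[1,0,0]$), and $h_1,h_2,h_3$ are the three required distinct concurrent hyperplanes. This yields a genuine $\TD(3,n)$ on the given lines.

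The main obstacle I anticipate is purely bookkeeping rather than conceptual: carrying out the line-meets-line computations in $P^2(\D)$ correctly when $\D$ is noncommutative, i.e.\ making sure every homogeneous coordinate is scaled on the correct side so that the "determinant" collinearity criterion is the right one, and then verifying that the induced operation on $G$ is honest addition and not some twisted version of it. Everything else follows formally from the loop property already proved in the introduction.
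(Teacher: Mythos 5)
Your proposal is correct and takes essentially the same route as the paper, which proves the $d$-dimensional version (Proposition~\ref{prop3}) by exactly this computation: identify $\odot$ with addition by calculating $X'=[-\alpha,1,0]$, $Y'=[\beta,1,1]$ and $X\odot Y=[\alpha+\beta,0,1]$, read off the coordinates of the three parts from $X'$ and $Y'$, and determine the admissible coordinate changes to compare the groups. One small slip in the coordinate-change step: if the two choices use different base points $1_1$, the affine map relating the two parametrisations of $h_1$ has constant term $c\in G_1$ rather than $c=0$, but since $G_1$ is an additive group this still gives $G_1=bG_2a$.
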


Suppose that a skew field $\D$ contains a finite additive subgroup $G$.
Then $\D$ necessarily has prime
characteristic $p$, and $G$ is isomorphic to the direct sum of finitely many copies of $\Z_p$, the additive group of the field $\F_p$ with $p$ elements.
The next corollary is generalised in Corollary~\ref{cor3}.
\begin{corollary}\label{cor1}
Suppose that a $\TD(3,n)$ lies on three lines in $P^2(\D)$.
\begin{itemize}
 \item If $\D$ has characteristic $0$, the lines are nonconcurrent.
\item If $\D$ has prime characteristic $p$ and the lines are concurrent, then $n$ is
a power of $p$.
\end{itemize}
\end{corollary}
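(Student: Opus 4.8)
The plan is to deduce both assertions directly from Proposition~\ref{prop1} together with the structural remark on finite additive subgroups of a skew field stated just above it. Concurrency of the three lines is precisely the hypothesis under which Proposition~\ref{prop1} produces a subgroup $G$ of $(\D,+)$ of order $n$, so the corollary reduces entirely to the question of when a skew field can contain such a finite additive subgroup and how large it can be. Note that, since a $\TD(3,n)$ has order $n\geq 3$, this $G$ is nontrivial.

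First I would record the short argument behind the remark. Pick any nonzero $g\in G$; since $G$ is finite, $g$ has a finite additive order $m\geq 2$, i.e.\ $mg=0$, where $mg$ denotes the $m$-fold sum $g+\dots+g$. Writing $mg=(m\cdot 1)g$, with $m\cdot 1$ the $m$-fold sum of the multiplicative identity of $\D$, and using that $\D$ is a skew field with $g\neq 0$, we get $m\cdot 1=0$. Hence $\D$ has positive, and therefore prime, characteristic $p$. Then $\D$ is a vector space over its prime subfield $\F_p$, and $G$, being closed under addition, is closed under integer multiples and hence is an $\F_p$-subspace of $\D$; being finite it has cardinality $p^k$ for some $k\geq 1$, and in particular $G\cong\Z_p^{\,k}$.

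The two bullet points then follow at once. If $\D$ has characteristic $0$, no nontrivial finite additive subgroup exists, so by Proposition~\ref{prop1} the three lines cannot be concurrent, i.e.\ they are nonconcurrent. If $\D$ has prime characteristic $p$ and the three lines are concurrent, Proposition~\ref{prop1} gives a subgroup $G\leq(\D,+)$ with $\card{G}=n$, and the previous paragraph yields $n=\card{G}=p^k$, a power of $p$.

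I expect no genuine obstacle here: the computation is routine, and the only matters requiring care are purely bookkeeping — making sure the finite additive subgroup supplied by Proposition~\ref{prop1} is understood to be nontrivial (guaranteed by $n\geq 3$), and that repeated addition inside $\D$ may legitimately be read as scalar multiplication over the prime subfield $\F_p$.
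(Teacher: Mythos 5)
Your proposal is correct and is essentially the argument the paper intends: the corollary is an immediate consequence of Proposition~\ref{prop1} together with the remark preceding it that a finite additive subgroup forces prime characteristic $p$ and is a direct sum of copies of $\Z_p$ (the paper derives the planar statement from its higher-dimensional counterpart, Corollary~\ref{cor3}, which rests on exactly the same observation). Your explicit justification of that remark, via the additive order of a nonzero element and the $\F_p$-vector-space structure, fills in the routine details correctly.
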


Now we consider the case where $h_1$, $h_2$, $h_3$ are nonconcurrent.
After choosing $1_1\in h_1\cap V$ and $1_2\in h_2\cap V$, we may choose 
homogeneous coordinates such that $1_1=[0,1,1]$, and $1_2=[1,0,1]$, 
$1_3=[1,-1,0]$, and such that $h_1$ has equation $x=0$, $h_2$ equation $y=0$, 
and $h_3$ equation $z=0$.
Again, the coordinates of the points in $V$ depend on the choices made above.
The next proposition extends Proposition~12 in the paper \cite{PS}.
A further extension is found in Proposition~\ref{prop4}.
\begin{proposition}\label{prop2}
Let $(V,\setP,\setB)$ be a $\TD(3,n)$ which lies on the nonconcurrent 
lines $h_1$, $h_2$, $h_3$ of $P^2(\D)$.
If we choose homogeneous coordinates as above, then there exists 
a subgroup $G$ of $(\D^*,\cdot)$ of order $n$ such that
\begin{equation}\label{coord2}
\left.
\begin{aligned}
  h_1\cap V&=\setbuilder{[0,\gamma,1]}{\gamma\in G},\\
  h_2\cap V&=\setbuilder{[\gamma,0,1]}{\gamma\in G},\\
  h_3\cap V&=\setbuilder{[-1,\gamma,0]}{\gamma\in G}.
\end{aligned}
\qquad\right\}
\end{equation}
The group $G$ depends only on the choice of coordinates.
For any two such choices, the two groups $G_1$ and $G_2$ so obtained are 
conjugates, i.e., $G_1=a^{-1}G_2a$ for some $a\in\D^*$.

Conversely, given any subgroup $G$ of $(\D^*,\cdot)$ of order $n$, 
\eqref{coord2} gives an embedding of a $\TD(3,n)$ on the nonconcurrent 
lines $h_1$, $h_2$, $h_3$ with equations $x=0$, $y=0$, $z=0$, respectively.
\end{proposition}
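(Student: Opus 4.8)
The plan is to read off explicit homogeneous coordinates for every point of $V$ from the two facts that the parts lie on the $h_i$ and that the blocks are collinear triples, and then to translate the $\TD(3,n)$ axioms into statements about a subset $G$ of $\D^*$. An embedding never places a point of $V$ on $h_i\cap h_j$ for $i\neq j$, so every point of $P_1=V\cap h_1$ has the form $[0,\gamma,1]$, every point of $P_2=V\cap h_2$ the form $[\gamma,0,1]$, and every point of $P_3=V\cap h_3$ the form $[-1,\gamma,0]$ with $\gamma\in\D^*$; hence there are sets $A_1,A_2,A_3\subseteq\D^*$, each of size $n$, with $P_i$ described by the appropriate formula as $\gamma$ runs over $A_i$. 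A one-line computation — the line through $[0,a,1]$ and $[b,0,1]$ consists of the points $[b\nu,a\mu,\mu+\nu]$, and putting the last coordinate to $0$ and renormalising gives $[-1,ab^{-1},0]$ — shows that $[0,a,1],[b,0,1],[-1,c,0]$ are collinear exactly when $c=ab^{-1}$, so the blocks are precisely the triples $\{[0,a,1],[b,0,1],[-1,ab^{-1},0]\}$ with $a\in A_1$, $b\in A_2$, $ab^{-1}\in A_3$. Taking $a=b=1$ in this formula identifies $1_3=1_1 1_2\cap P_3$ as $[-1,1,0]$, so $1\in A_i$ for $i=1,2,3$ (using also $1_1=[0,1,1]$, $1_2=[1,0,1]$). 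Now feed in the $\TD(3,n)$ axiom: the unique block on $[0,a,1]$ and $1_2=[1,0,1]$ has third point $[-1,a,0]\in P_3$, so $A_1\subseteq A_3$, hence $A_1=A_3$ by counting; the unique block on $[0,a,1]$ and $1_3=[-1,1,0]$ has third point $[a,0,1]\in P_2$, so $A_1\subseteq A_2$, hence $A_1=A_2$. Writing $G:=A_1=A_2=A_3$, the statement ``$ab^{-1}\in A_3$ whenever $a\in A_1$, $b\in A_2$'' becomes ``$ab^{-1}\in G$ for all $a,b\in G$''; with $a=1$ this gives $G^{-1}\subseteq G$, and closure under products then follows, so $G$ is a subgroup of $(\D^*,\cdot)$ of order $n$ and \eqref{coord2} holds. (Alternatively one computes $[0,a,1]\odot[0,a',1]=[0,aa',1]$, so that the loop $(P_1,\odot)$ is just multiplication on $A_1$, forcing $A_1$ to be a group.)

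\emph{Dependence on the coordinates.} Two admissible coordinatisations differ by a projectivity $\tau$ of $P^2(\D)$ that fixes each of the lines $x=0$, $y=0$, $z=0$. Its matrix must then send each coordinate axis $\D\vect{e}_i$ into the intersection of the two coordinate planes through it, namely $\D\vect{e}_i$ itself; so $\tau$ has the form $[x,y,z]\mapsto[\alpha x,\beta y,\gamma z]$ with $\alpha,\beta,\gamma\in\D^*$. Applying $\tau$ to $[0,a,1]$ and renormalising the last coordinate to $1$ gives $G_2=\beta G_1\gamma^{-1}$; since $1\in G_2$ we may write $\gamma=\beta g$ with $g\in G_1$, whence $G_2=\beta G_1 g^{-1}\beta^{-1}=\beta G_1\beta^{-1}$, that is, $G_1=a^{-1}G_2 a$ with $a=\beta$.

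\emph{Converse.} Given a subgroup $G\leq(\D^*,\cdot)$ of order $n$, define $V$ and $\setP$ by \eqref{coord2} and take as blocks the collinear triples $\{[0,a,1],[b,0,1],[-1,ab^{-1},0]\}$ with $a,b\in G$; the third point lies in $V$ precisely because $ab^{-1}\in G$. The three parts are disjoint (a common point would lie on some $h_i\cap h_j$, which meets $V$ in nothing), so $\card{V}=3n$; two points in one part lie on the corresponding $h_i$ and in no block; and any two points in distinct parts lie in exactly one block, since the group axioms let one solve $ab^{-1}=c$ uniquely for whichever of $a,b,c$ is unspecified. Thus $(V,\setP,\setB)$ is a $\TD(3,n)$, and it is embedded: each block-line $\ell_B$ meets all three $h_i$ and hence equals none of them, so $\ell_B$ is determined by the triple of points $\ell_B\cap h_i=B$ — whence the block-lines are distinct and no $\ell_B$ is contained in an $H_P=h_i$ — while $h_1,h_2,h_3$ are nonconcurrent because $x=y=z=0$ has no nonzero solution.

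\emph{Main obstacle.} No step is deep; the real care is in the noncommutative bookkeeping — above all getting the collinearity relation in the correct one-sided form $c=ab^{-1}$ rather than $b^{-1}a$, and consistently scaling homogeneous coordinates on the right — together with the mild point that the residual freedom in the coordinatisation is a full copy of $\D^*$, which is exactly why one can only expect the $G_i$ to be conjugate and not equal.
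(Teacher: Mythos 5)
Your proof is correct and follows essentially the same route as the paper, which obtains Proposition~\ref{prop2} as the $d=2$ case of Proposition~\ref{prop4}: your collinearity computation $c=ab^{-1}$ is precisely the paper's calculation of $X'$, $Y'$ and $X\odot Y$ (your parenthetical remark \emph{is} the paper's argument), and your coordinate-change and converse paragraphs just spell out what the paper leaves as ``a calculation''. If anything, your treatment of the transition matrix --- allowing a general diagonal $\mathrm{diag}(\alpha,\beta,\gamma)$ and using $1\in G_2$ to reduce to conjugation --- is slightly more careful than the paper's terse formula, since it covers changes of the base points $1_1,1_2$ as well.
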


The next corollary, although purely geometric, needs some algebra in its 
proof (as can be seen in the proof of its higher-dimensional counterpart 
Corollary~\ref{cor4}).
\begin{corollary}\label{cor2}
If a $\TD(3,n)$ can be embedded in three concurrent lines of $P^2(\D)$, then 
it cannot be embedded in three nonconcurrent lines of $P^2(\D)$.
\end{corollary}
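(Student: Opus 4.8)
The plan is to argue by contradiction, extracting an algebraic obstruction from the two incompatible coordinatisations supplied by Propositions~\ref{prop1} and~\ref{prop2}. Suppose a single $\TD(3,n)$ $(V,\setP,\setB)$ admits both an embedding on three concurrent lines and an embedding on three nonconcurrent lines of $P^2(\D)$. Fix points $1_1\in P_1$ and $1_2\in P_2$ and form the loop $(P_1,\odot)$ as in the introduction; this loop is an invariant of the $\TD(3,n)$ together with the two chosen points, independent of how $V$ sits in the plane. The strategy is to compute this loop twice.

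First I would use Proposition~\ref{prop1}: the concurrent embedding yields, after the standard coordinate choice, a subgroup $G$ of $(\D,+)$ of order $n$ with the points of the three parts as in \eqref{coord1}. A short computation with the operation $\odot$ — using $X'=1_2X\cap P_3$, $Y'=1_3Y\cap P_2$, $X\odot Y=X'Y'\cap P_1$ and the line equations $y=0$, $y=z$, $z=0$ — should identify $(P_1,\odot)$ with $(G,+)$, via the bijection $[\gamma,0,1]\mapsto\gamma$. In particular the loop is a group of exponent $p=\operatorname{char}\D$, so $n$ is a power of $p$ by Corollary~\ref{cor1}, and moreover $X\odot X=1_1$ for every $X\in P_1$ (if $p=2$) or more generally the loop is an elementary abelian $p$-group. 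Second, I would feed the \emph{same} $\TD(3,n)$ into Proposition~\ref{prop2}: the nonconcurrent embedding yields a subgroup $H$ of $(\D^*,\cdot)$ of order $n$, with points as in \eqref{coord2}, and the analogous computation identifies $(P_1,\odot)$ with $(H,\cdot)$ via $[0,\gamma,1]\mapsto\gamma$. Since the loop is determined by the abstract $\TD(3,n)$ and the two marked points, we conclude $(G,+)\cong(H,\cdot)$ as groups.

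Now I would derive the contradiction purely inside $\D$. On one hand $(G,+)$ is an elementary abelian $p$-group, so every nonidentity element has additive order $p$; transporting this through the isomorphism, every element of $H\subseteq\D^*$ satisfies $h^p=1$. On the other hand, $H$ has order $n=p^k>p$ (for $k\geq 2$; the case $k=1$ is handled separately, see below), so $H$ contains an element $h\neq 1$ with $h^p=1$, i.e. $h$ is a root of $X^p-1=(X-1)^p$ in $\D$ (the factorisation holds because $\operatorname{char}\D=p$). Thus $(h-1)^p=0$ in $\D$, forcing $h-1=0$ since a skew field has no nonzero nilpotents — contradiction. When $k=1$, so $n=p$, the group $H$ is cyclic of order $p$ generated by some $h$ with $h^p=1$, $h\neq 1$, and the same nilpotency argument applies verbatim. (If $\operatorname{char}\D=0$ the concurrent embedding is impossible outright by Corollary~\ref{cor1}, so there is nothing to prove.) This completes the contradiction and hence the corollary.

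The main obstacle I anticipate is the first-mentioned bookkeeping: verifying that the loop operation $\odot$, read off from the coordinates \eqref{coord1} and \eqref{coord2} via the intersection formulas, really is $+$ on $G$ and $\cdot$ on $H$ respectively. This requires care with the right-vector-space convention and with which part plays the role of $P_1,P_2,P_3$ (the formula for $\odot$ fixes $P_1$ as the underlying set), but it is a routine if slightly tedious computation with $2\times 2$ determinants over $\D$. Everything after that — the transport of the exponent condition and the nilpotent-in-a-skew-field argument — is short and clean; the key noncommutative-friendly fact is simply that $X^p-1$ factors as $(X-1)^p$ when $\operatorname{char}\D=p$.
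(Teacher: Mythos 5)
Your proposal is correct and follows essentially the same route as the paper, which proves the higher-dimensional version (Corollary~\ref{cor4}) by combining Corollary~\ref{cor3} (the concurrent embedding forces characteristic $p$ and $n=p^k$) with the fact that a nontrivial finite $p$-group in $\D^*$ is impossible in characteristic $p$ (Lemma~\ref{plemma}, proved by the same $(x-1)^p=x^p-1$ factorisation you use). Your detour through the loop $(P_1,\odot)$ is sound but unnecessary: once Corollary~\ref{cor1} gives $n=p^k$, Cauchy's theorem already supplies an element of multiplicative order $p$ in the group $H$ of the nonconcurrent embedding, without needing the isomorphism $(G,+)\cong(H,\cdot)$.
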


If $G$ is a subgroup of $(\D,+)$ and $a\in\D$, then $Ga$ is also a subgroup of 
$(\D,+)$, and \[\D_G:=\setbuilder{a\in\D}{Ga\subseteq G}\] is a subring of $\D$.
If $G$ is nontrivial, for any $g\in G\setminus\{0\}$, $\D_G$ is a subset of 
$g^{-1}G$, which is isomorphic to $G$.
Consequently, if $G$ is finite, $\D$ has prime characteristic $p$, say, and 
then $G$ is a $p$-group, and $(\D_G,+)$ is also a $p$-group.
(When $G$ is finite, $\D_G$ is in fact a subfield of $\D$.)

\begin{theorem}\label{thm1}
If a Latin square of side $n\geq 3$ with transversal is embedded as a $\TD(3,n)$ in
three lines $h_i$ of $P^2(\D)$ with transversal point $\infty$, then the $h_i$ are concurrent.
If homogeneous coordinates are chosen as in Proposition~\ref{prop1}, then 
the transversal point $\infty=[\gamma,a,1]$, where $\gamma\in G$, 
$a\in\D_G\setminus\{0,1\}$, and $G$ is the subgroup of $(\D,+)$ associated 
to the embedding.
Conversely, any point with these coordinates is a transversal point.

In particular, a transversal point lies on a line with equation $y=az$ 
for some $a\in\D_G\setminus\{0,1\}$.
A Latin square embedded in three concurrent lines with associated group 
$G$ has a transversal if and only if $\card{\D_G}\geq 3$.
\end{theorem}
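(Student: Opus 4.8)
The plan is to treat separately the claim that the part-lines $h_i$ are concurrent and the description of the transversal point in the concurrent case; the first is the substantive part. In both cases the starting observation is that the transversal point $\infty$ lies on none of the $h_i$: if $\infty\in h_j$, then for each transversal block $B$ the line $\ell_B$ passes through both $\infty$ and the point $\fhi(B)\cap h_j$; if these differ for some $B$ then $\ell_B=h_j$, violating the embedding axiom that no $\ell_B$ lies in an $H_P$, while if they coincide for every transversal block then all $n$ of the part-$P_j$ points of the transversal equal the single point $\infty$, impossible for $n\geq3$. Hence projection from $\infty$ defines perspectivities $\psi_{ij}\colon h_i\to h_j$, and the requirement that the transversal blocks partition $V$ is exactly the requirement that $\psi_{12}$ and $\psi_{13}$ restrict to bijections $V\cap h_1\to V\cap h_2$ and $V\cap h_1\to V\cap h_3$, since for $X\in V\cap h_1$ the line $\overline{\infty X}$ is the line of the transversal block through $X$.

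First I would rule out the nonconcurrent case. Install the coordinates of Proposition~\ref{prop2}, with associated multiplicative subgroup $G\leq\D^*$ of order $n$, and write $\infty=[p,q,r]$. Since $\infty$ lies on none of $h_1\colon x=0$, $h_2\colon y=0$, $h_3\colon z=0$, we have $p,q,r\neq0$ and may normalise $r=1$. A direct computation of the line through $\infty$ and $[0,\gamma,1]$ and its meet with $h_2$ yields $\psi_{12}([0,\gamma,1])=[p(\gamma-q)^{-1}\gamma,0,1]$ (the vertex $[1,0,0]$ when $\gamma=q$). Since this must lie in $V\cap h_2=\setbuilder{[\delta,0,1]}{\delta\in G}$ for every $\gamma\in G$, first $q\notin G$; and then $p(\gamma-q)^{-1}\gamma\in G$ rearranges, using $G\gamma^{-1}=G$ and $G^{-1}=G$, to $\gamma-q\in Gp$. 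Letting $\gamma$ range over $G$ gives $G-q\subseteq Gp$, and comparing cardinalities, $G-q=Gp$.

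The crux, which I expect to be the main obstacle, is to get a contradiction from $G-q=Gp$ for a finite subgroup $G\leq\D^*$ with $q\in\D^*\setminus G$. Since $1\in G$ we may write $1-q=up$ with $u\in G$, whence $Gp=G(1-q)$ and so $G-q=\setbuilder{g(1-q)}{g\in G}$ as $n$-element sets. Summing the elements of both sets and writing $s:=\sum_{g\in G}g$ gives $s-(n\cdot1_\D)q=s(1-q)$, hence $(n\cdot1_\D-s)q=0$, and since $\D$ has no zero divisors, $s=n\cdot1_\D$. On the other hand $hs=s$ for every $h\in G$ (reindex the sum by $g\mapsto hg$), so $(h-1)s=0$; choosing $h\neq1$ forces $s=0$. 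Therefore $n\cdot1_\D=0$, so $\D$ has prime characteristic $\ell$ dividing $n$; then $G$ contains an element $g$ of order $\ell$ by Cauchy's theorem, which is absurd since $(g-1)^\ell=g^\ell-1=0$ forces $g=1$. This contradiction shows the $h_i$ are concurrent.

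For the concurrent case I would use the coordinates of Proposition~\ref{prop1}, with associated additive subgroup $G\leq(\D,+)$ of order $n$ and common point $[1,0,0]$ of the $h_i$. Since $\infty$ avoids $h_3\colon z=0$ we may write $\infty=[\gamma_0,a,1]$; then $\infty\notin h_1\colon y=0$ forces $a\neq0$ and $\infty\notin h_2\colon y=z$ forces $a\neq1$. Computing $\psi_{12}$ (project from $\infty$, intersect with $y=z$) gives $\psi_{12}([\gamma,0,1])=[\gamma_0a^{-1}+\gamma(1-a^{-1}),1,1]$, and similarly $\psi_{13}([\gamma,0,1])=[\gamma_0a^{-1}-\gamma a^{-1},1,0]$. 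Requiring $\psi_{12}([\gamma,0,1])\in V\cap h_2=\setbuilder{[\delta,1,1]}{\delta\in G}$ for all $\gamma\in G$ forces, via the $\gamma=0$ term and closure of $G$ under subtraction, $1-a^{-1}\in\D_G$, hence $a^{-1}\in\D_G$; since $\D_G$ is a field for finite $G$, this gives $a\in\D_G\setminus\{0,1\}$ and then $\gamma_0\in G$. Conversely, for any $\gamma_0\in G$ and $a\in\D_G\setminus\{0,1\}$ the two formulas exhibit $\psi_{12}$ and $\psi_{13}$ as injections of $V\cap h_1$ into $V\cap h_2$ and $V\cap h_3$, hence bijections by finiteness; the $n$ blocks through $X$ and $\psi_{12}(X)$, for $X\in V\cap h_1$, then partition $V$, and their lines, each equal to $\overline{\infty X}$, are concurrent at $\infty$, so $\infty$ is a transversal point. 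Finally $\infty=[\gamma_0,a,1]$ satisfies $y-az=0$, a line also through $[1,0,0]$; and since $0\in G$ and $\{0,1\}\subseteq\D_G$ always, a transversal point exists precisely when $\D_G$ contains an element other than $0$ and $1$, that is, when $\card{\D_G}\geq3$.
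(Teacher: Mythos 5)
Your proposal is correct and follows essentially the same route as the paper: the paper derives Theorem~\ref{thm1} from Proposition~\ref{th2} and Theorem~\ref{thm4}, whose proofs likewise project from $\infty$ to obtain a set identity of the form $G+a=Gb$ (your $G-q=Gp$) and kill it with exactly your algebra, packaged there as Lemmas~\ref{0lemma}--\ref{technical} (sum over the group, Cauchy, the $(x-1)^p=x^p-1$ trick), and then obtain $G\beta+\valpha=G$ in the concurrent case. The only cosmetic differences are your choice of projections (the paper projects $H_3\to H_1$ in the concurrent case, which yields $\beta\in\D_G$ directly without needing that $\D_G$ is closed under inversion) and your more explicit justification that $\infty$ lies on none of the $h_i$.
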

The above theorem is generalised in Theorem~\ref{thm4}.

The next theorem gives a description of the embedding of mutually orthogonal Latin squares.
\begin{theorem}
Let $(V,\setP,\setB)$ be a $\TD(k,n)$, $n\geq 3$, $k\geq 4$ with an embedding into $P^2(\D)$ on lines $h_1,h_2,\dots,h_k$.
Then the lines $h_1,\dots,h_k$ are concurrent.
If coordinates are chosen such that $h_1,h_2,h_3$ have coordinates as in 
Proposition~\ref{prop1}, then there exist distinct 
$a_4,\dots,a_k\in\D_G\setminus\{0,1\}$ such that
\[ h_i\cap V = \setbuilder{[\gamma,a_i,1]}{\gamma\in G},\quad i=4,\dots,k,\]
where $G$ is a subgroup of $(\D,+)$ of order $n$.
Furthermore, $n$ is a prime power $p^m$, $G$ is isomorphic to $\Z_p^m$, 
$\card{\D_G}=p^t$ for some $t\leq m$, and $k\leq\card{\D_G}+1$.

In particular, if a Latin square with transversal can be embedded into 
$P^2(\D)$, then the Latin square can be extended to a $\TD(\card{\D_G}+1,n)$ 
with an embedding that extends the original embedding.
\end{theorem}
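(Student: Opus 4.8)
The plan is to deduce the concurrency of the $h_i$ from Theorem~\ref{thm1} applied to a Latin square with transversal hidden inside the $\TD(k,n)$, then to read the coordinates off Proposition~\ref{prop1} and Theorem~\ref{thm1}, and finally to prove the extension statement by a direct computation in the field $\D_G$. For the concurrency: since $k\geq 4$ I would fix three parts $P_1,P_2,P_3$ and a fourth part $P_4$, and restrict $(V,\setP,\setB)$ to $P_1\cup\dots\cup P_4$. This restriction is a $\TD(4,n)$, and $\fhi$ restricts to an embedding of it on $h_1,h_2,h_3,h_4$ (each restricted block spans the same line $\ell_B$ as before, so the block lines stay distinct and avoid the $h_i$). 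Now fix $v\in P_4$: the blocks of this $\TD(4,n)$ through $v$, with $v$ deleted, form a partition $\setT$ of $P_1\cup P_2\cup P_3$ into $n$ blocks of the induced $\TD(3,n)$, i.e.\ a transversal of the associated Latin square, and the lines of these blocks all pass through $\fhi(v)$. Hence $\fhi$ restricted to $P_1\cup P_2\cup P_3$ is an embedding of a Latin square of side $n\geq 3$ with transversal, and Theorem~\ref{thm1} gives that $h_1,h_2,h_3$ are concurrent. Running this over every choice of three parts (using a fourth as ``transversal part''), every three of $h_1,\dots,h_k$ are concurrent; since $h_1\cap h_2$ is a single point $O$, every $h_i$ contains $O$, so all $k$ lines are concurrent at $O$.

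Next I would put $h_1,h_2,h_3$ into the coordinates of Proposition~\ref{prop1}, getting $O=[1,0,0]$, equations $y=0$, $y=z$, $z=0$, and the coordinatisation \eqref{coord1} in terms of a subgroup $G$ of $(\D,+)$ of order $n$. For $i\geq 4$, the line $h_i$ passes through $O$ and is none of $h_1,h_2,h_3$, so normalising its homogeneous equation shows it has equation $y=a_iz$ for a unique $a_i\in\D\setminus\{0,1\}$; thus $h_i\cap V\subseteq\setbuilder{[x,a_i,1]}{x\in\D}$. On the other hand, every point of $\fhi(P_i)$ is a transversal point of the Latin square on $P_1,P_2,P_3$ (take it as the $v$ above), so by Theorem~\ref{thm1} it equals $[\gamma,a,1]$ with $\gamma\in G$ and $a\in\D_G\setminus\{0,1\}$; comparison gives $a=a_i\in\D_G\setminus\{0,1\}$ with first coordinate in $G$, and counting points forces $h_i\cap V=\setbuilder{[\gamma,a_i,1]}{\gamma\in G}$. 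The $a_i$ are distinct because the $h_i$ are. The remaining assertions are immediate from the remarks preceding Theorem~\ref{thm1}: $G$ is a finite additive subgroup, so $n=p^m$ and $G\cong\Z_p^m$; $\D_G$ is a finite subfield of $\D$, so $\abs{\D_G}=p^t$, and $t\leq m$ since $\D_G$ embeds additively in $g^{-1}G\cong G$; and $a_4,\dots,a_k$ are $k-3$ distinct elements of $\D_G\setminus\{0,1\}$, whence $k\leq\abs{\D_G}+1$.

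For the ``in particular'' statement, let a Latin square with transversal be embedded in $P^2(\D)$; by Theorem~\ref{thm1} its three parts lie on concurrent lines with associated group $G$ and $q:=\abs{\D_G}\geq 3$, and $G$ is a right $\D_G$-vector space since $Ga=G$ for every $a\in\D_G\setminus\{0\}$. With $h_1,h_2,h_3$ coordinatised as in Proposition~\ref{prop1}, I would adjoin for each $a\in\D_G\setminus\{0,1\}$ a new part $P_a$ with $\fhi(P_a):=\setbuilder{[\gamma,a,1]}{\gamma\in G}$ on $h_a\colon y=az$. A direct computation --- using that $\D_G$ is commutative and stabilises $G$ on the right --- shows that the line of a block $B$ arising from $[\gamma_1,0,1]$ and $[\gamma_2,1,1]$ meets $h_a$ in $[\gamma_1(1-a)+\gamma_2a,\,a,\,1]\in\fhi(P_a)$; adjoining the preimages of these points enlarges $B$ to a $(q+1)$-set $B'$. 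Parametrising blocks by $(\gamma_1,\gamma_2)\in G^2$, the $P_a$-coordinate of $B'$ is $\gamma_1(1-a)+\gamma_2a$ and the $P_3$-coordinate is $\gamma_1-\gamma_2$, and for any two distinct ``slopes'' from $\D_G\cup\{\infty\}$ the induced map $G^2\to G^2$ is $\D_G$-linear with invertible $2\times 2$ coefficient matrix over $\D_G$, hence bijective --- which is exactly the uniqueness of the block through two points in distinct parts. So $(V\cup\bigcup_aP_a,\ \setP\cup\{P_a\},\ \{B'\})$ is a $\TD(q+1,n)$, and as $\ell_{B'}=\ell_B$ and no $\ell_B$ lies in a part-hyperplane, $\fhi$ embeds it and extends the original embedding.

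I expect the first step --- recognising that deleting a point of a fourth part turns a $\TD(4,n)$ into an embedded Latin square with transversal, so that Theorem~\ref{thm1} becomes available --- to be the only genuinely new idea; everything after that is bookkeeping together with one small piece of linear algebra over $\D_G$, where the one thing to watch is keeping scalar multiplications on the correct side and invoking commutativity of $\D_G$ exactly where the computation needs it.
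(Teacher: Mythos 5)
Your proof is correct and follows essentially the same route as the paper, which obtains this statement as the $d=2$ case of Theorem~\ref{thm5}: the key step there is likewise that each point of a fourth part is a transversal point of the $\TD(3,n)$ on the other three parts, so that Theorem~\ref{thm4} (your Theorem~\ref{thm1}) forces concurrency and pins down the coordinates, after which the numerical bounds are immediate. The only difference is that you also write out the verification of the final extension claim, which the paper leaves implicit.
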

This theorem is generalised in Theorem~\ref{thm5}.

\section{The finite multiplicative subgroups of skew fields}\label{section:groups}
It is well known that any finite multiplicative subgroup of a field is cyclic.
This is in marked contrast to (non-commutative) skew fields where a much 
greater variety of finite multiplicative groups appear.
Completing earlier work of Herstein \cite{Herstein}, the finite multiplicative 
subgroups of skew fields have been characterised by Amitsur \cite{Amitsur}.
This classification is involved (see the end of this section for a partial 
formulation) and we only give a few representative examples.

As already observed by Herstein \cite{Herstein}, if $\D$ has prime 
characteristic, any finite subgroup $G$ of $\D^*$ generates a subring which 
is a finite-dimensional vector space over the prime field of $\D$, and 
therefore a subfield of $\D$.
By Wedderburn's theorem, it follows that the subring is commutative, and it 
follows that $G$ is cyclic.
Herstein similarly proved that if $G$ is an abelian subgroup of $\D^*$ 
(with $\D$ of arbitrary characteristic) then $G$ is cyclic.

The interesting case is therefore when $\D$ is a non-commutative skew field of 
characteristic $0$ and $G$ a nonabelian subgroup of $\D^*$.
The smallest such $G$ is the quaternion group of order $8$: 
$G=\{\pm 1,\pm i,\pm j,\pm k\}$, which is a subset of the quaternions $\HQ$.
By Proposition~\ref{prop2} this gives a $\TD(3,8)$ of $24$ points in $P^2(\HQ)$.
Since $G$ is nonabelian, Proposition~\ref{prop2} again gives that 
this $\TD(3,8)$ cannot be embedded in $P^2(\F)$, where $\F$ is a field.
By Corollary~\ref{cor2} it can also not be embedded on three concurrent lines 
of a projective plane over any division ring.

Coxeter \cite{Coxeter} classified the finite multiplicative subgroups of the 
quaternions $\HQ$.
Those that are not commutative, hence not conjugate to a subgroup of the 
nonzero complex numbers $\Cx^*$, are conjugate to one of the following:
\begin{enumerate}
\item The \emph{binary dihedral group}
\[D_n^*=\setbuilder{e^{ik\pi/n}, e^{ik\pi/n}j}{0\leq k<2n}\]
of order $4n$ for any $n\geq 2$ 
(with the quaternion group being the case $n=2$) giving a $\TD(3,4n)$,
\item the \emph{binary tetrahedral group} consisting of the $24$ units of 
the Hurwitz integers 
\[T^*=\left\{\pm 1,\, \pm i,\, \pm j,\, \pm k,\, 
\frac{1}{2}(\pm 1\pm i\pm j\pm k)\right\}\]
giving a $\TD(3,24)$,
\item the \emph{binary octahedral group} $O^*$ of order $48$:
\[ O^*:= T^*\cup\setbuilder{\frac{1}{\sqrt{2}}(\pm a\pm b)}{a,b\in\{1,i,j,k\},a\neq b}\]
giving a $\TD(3,48)$,
\item and the \emph{binary icosahedral group} $I^*$ of order $120$:
\[ I^*:= T^*\cup\setbuilder{\frac{1}{2}(\pm\pi_2\pm\fhi^{-1}\pi_3\pm\fhi\pi_4)}%
{\parbox{4cm}{$\pi=\pi_1\pi_2\pi_3\pi_4$ is an even\\ permutation of $\{1,i,j,k\}$}},\]
where $\fhi=(1+\sqrt{5})/2$, giving a $\TD(3,120)$.
\end{enumerate}

Amitsur found another class of groups (called D-groups in \cite{Amitsur}) 
that occur as multiplicative subgroups of division rings.
They are of the form
\[ G_{m,n,r} := \groupbuilder{a,b}{a^m=b^n=1,\; bab^{-1}=a^r},\]
where $m$, $n$, $r$ satisfy a complicated collection of relations 
\cite[Theorems 4 and 5]{Amitsur}.
In particular, $r^n\equiv 1\pmod{m}$ ensures that $\card{G_{m,n,r}}=mn$.
The smallest nonabelian multiplicative subgroup of odd order turns out to 
be $G_{7,9,2}$ of order $63$.
As demonstrated by Lam \cite{Lam2}, this group occurs in the following 
skew field.
Let $\zeta$ be a primitive $21$\textsuperscript{st} root of unity.
Introduce a new symbol $b$ that satisfies $b^3=\zeta^7$ and 
$b\zeta=\zeta^{16}b$.
Then the $\Q$-algebra 
\[\D=\setbuilder{\alpha +\beta b + \gamma b^2}{\alpha,\beta,\gamma\in\Q(\zeta)}\] 
turns out to be a division algebra, so that it is in particular, a skew field.
Note that since $[\Q(\zeta)\colon\Q]=\fhi(21)=12$, the dimension of 
$\D$ over $\Q$ is $36$.
If we set $a=\zeta^3$, then the subgroup of $\D^*$ generated by $a$ and 
$b$ is $G_{7,9,2}$.
See Lam \cite{Lam2} for further details, as well as the next largest 
example $G_{13,9,9}$ of a nonabelian multiplicative subgroup, which is of 
order $117$.

Amitsur \cite[Theorem~7]{Amitsur} proved that all non-cyclic multiplicative 
subgroups of division rings must be either of the form $G_{m,n,r}$ 
where $m$, $n$, $r$ satisfy certain properties, 
or $T^*\times G_{m,n,r}$ where $m$, $n$, $r$ satisfy certain properties, 
or $O^*$ or $I^*$.

\smallskip
\section{Some elementary algebraic lemmas}\label{alg}
\begin{lemma}\label{plemma}
In a skew field of characteristic $p$, no element can have
multiplicative order $p$.
\end{lemma}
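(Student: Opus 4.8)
The plan is to argue by contradiction. Suppose $\D$ is a skew field of characteristic $p$ and $x\in\D^*$ has multiplicative order exactly $p$, so $x^p=1$ but $x\neq 1$. The key is to play the additive structure (characteristic $p$) off against the multiplicative relation $x^p=1$. First I would observe that since $\D$ has characteristic $p$, the Frobenius-type identity $(x-1)^p = x^p - 1$ holds: more carefully, in the commutative subring generated by $x$ and $1$ (which is commutative because it is generated by a single element over the prime field), the binomial theorem applies and all the intermediate binomial coefficients $\binom{p}{i}$ for $0<i<p$ vanish mod $p$, so $(x-1)^p = x^p - 1^p = 1 - 1 = 0$.

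Now $x\neq 1$ means $x-1\neq 0$, and in a skew field every nonzero element is invertible, hence not a zero divisor; so $(x-1)^p=0$ forces $x-1=0$ by an immediate induction (if $y^p=0$ and $y\neq 0$ then $y\cdot y^{p-1}=0$ with $y$ invertible gives $y^{p-1}=0$, and repeat), contradicting $x\neq 1$. This contradiction completes the proof.

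There is essentially no obstacle here; the only point that needs a word of care is the justification that the binomial theorem is available, i.e.\ that $x$ commutes with $1$ (trivial) so that the subring $\Z_p[x]$ is commutative and the expansion of $(x-1)^p$ is legitimate. Everything else — the vanishing of $\binom{p}{i}$ mod $p$ for $0<i<p$, and the no-zero-divisors argument in a skew field — is standard. I would write the argument in three short sentences: expand $(x-1)^p$, conclude it is $0$, then cancel to get $x=1$.
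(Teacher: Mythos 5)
Your proof is correct and is essentially identical to the paper's: both expand $(x-1)^p=x^p-1=0$ via the binomial theorem mod $p$ (valid since $x$ and $-1$ commute) and then derive a contradiction from the fact that the nonzero element $x-1$ is invertible in a skew field, so no power of it can vanish. The only difference is that you spell out the cancellation step that the paper leaves implicit.
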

\begin{proof}
Let $x$ be an element of multiplicative order $p$, i.e.\ $x^p=1$, $x\neq 1$.
Then by the binomial theorem modulo $p$ applied to the commuting
elements $x$ and $-1$,
\[ 0=x^p-1=(x-1)^p \neq 0,\]
a contradiction.
(Note that this argument also works for $p=2$.)
\end{proof}

\begin{lemma}\label{0lemma}
Let $G$ be a finite nontrivial subgroup of $(\D^*,\cdot)$, where $\D$ is a skew field.
Then $\sum_{g\in G} g=0$.
\end{lemma}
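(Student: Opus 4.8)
The plan is to exploit the action of $G$ on itself. Let $S := \sum_{g \in G} g$. Pick any fixed element $h \in G$. The plan is to observe that left multiplication by $h$ permutes the elements of $G$, so that $hS = \sum_{g\in G} hg = \sum_{g'\in G} g' = S$. Hence $(h-1)S = 0$ in $\D$. Since $\D$ is a skew field, it has no zero divisors; thus if we can choose $h \in G$ with $h \neq 1$, we conclude $S = 0$. Because $G$ is nontrivial, such an $h$ exists, and the proof is complete.

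The only point that needs a word of care is the step $\sum_{g\in G} hg = \sum_{g\in G} g$: this is just the fact that the map $g \mapsto hg$ is a bijection of $G$ onto $G$ (its inverse is $g\mapsto h^{-1}g$), so reindexing the finite sum by this bijection leaves it unchanged. No commutativity is used, and the characteristic of $\D$ plays no role here (in contrast to Lemma~\ref{plemma}).

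I do not anticipate any real obstacle; the lemma is genuinely elementary, and the absence of zero divisors in a skew field is exactly what makes the argument go through. One could equally well multiply on the right by $h$ and get $S(h-1)=0$, giving $S=0$ by the same reasoning; the two-sided cancellation available in a skew field makes either version fine. It is worth noting explicitly that nontriviality of $G$ is essential: if $G = \{1\}$ then $S = 1 \neq 0$, which is why the hypothesis is stated.
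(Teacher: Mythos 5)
Your proof is correct and is essentially identical to the paper's: both multiply the sum $S$ on the left by a nonidentity element of $G$, use that this permutes the terms to get $(h-1)S=0$, and conclude $S=0$ from the absence of zero divisors in a skew field.
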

\begin{proof}
Consider an arbitrary $g_0\in G$.
Then
\[\sum_{g\in G} g = \sum_{g\in G} g_0g=g_0\left(\sum_{g\in G}
  g\right),\]
thus
\[ (1-g_0)\sum_{g\in G}g=0.\]
Therefore, either $\sum_{g\in G}g=0$ or $G=\{1\}$.
\end{proof}

\begin{lemma}\label{nonzero}
Let $G$ be a finite subgroup of $(\D^*,\cdot)$, where $\D$ is a skew field.
Then the order of $G$, considered as an element of $\D$, is nonzero:
\[\underbrace{1+1+\dots+1}_{\text{$\card{G}$ times}}\neq 0.\]
\end{lemma}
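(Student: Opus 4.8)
The plan is to split into two cases according to the characteristic of $\D$. If $\D$ has characteristic $0$, the sum of $\card{G}$ copies of $1$ is a positive integer in the prime field $\Q\subseteq\D$, hence nonzero, and there is nothing more to prove. So assume $\D$ has prime characteristic, say characteristic $q$. Then $G$ is a finite multiplicative subgroup of a skew field of prime characteristic, so by the discussion in Section~\ref{section:groups} (the Herstein--Wedderburn argument), $G$ generates a finite subfield of $\D$ and $G$ is cyclic; in particular $\card{G}$ divides the order of that subfield minus one, hence is coprime to $q$. The sum $1+1+\dots+1$ ($\card{G}$ times) equals the image of the integer $\card{G}$ in the prime field $\F_q\subseteq\D$, and since $q\nmid\card{G}$ this image is nonzero.

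There is a self-contained alternative that avoids quoting the classification: the key point we need is only that $\gcd(\card{G},q)=1$, equivalently that no prime divisor of $\card{G}$ equals $q$. Suppose for contradiction that $q\mid\card{G}$. By Cauchy's theorem $G$ contains an element $x$ of multiplicative order $q$, contradicting Lemma~\ref{plemma}. Hence $q\nmid\card{G}$, and as above the image of $\card{G}$ in $\F_q$ is nonzero, which is exactly the assertion. I would present this second argument, since it keeps the section elementary and self-contained and uses only Lemma~\ref{plemma} together with Cauchy's theorem for finite groups.

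The main (minor) obstacle is simply making sure the characteristic-$0$ case is not overlooked, since the statement as phrased is about an arbitrary skew field $\D$; once the case split is in place, each branch is a one-line observation. A secondary point to get right is the identification of the formal sum $\underbrace{1+\dots+1}_{\card{G}}$ in $\D$ with the image of the integer $\card{G}$ under the canonical ring homomorphism $\Z\to\D$: this is immediate, but it is what lets us conclude nonvanishing from $q\nmid\card{G}$. One could also remark that Lemma~\ref{0lemma} and Lemma~\ref{nonzero} together imply that the elements of a nontrivial finite $G\subseteq\D^*$ have zero sum while their number is invertible, a fact that will be used later when coordinatising embeddings.
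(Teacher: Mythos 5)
Your preferred (second) argument is exactly the paper's proof: assume the sum vanishes, so $\D$ has prime characteristic $q$ dividing $\card{G}$, then invoke Cauchy's theorem to get an element of multiplicative order $q$ and contradict Lemma~\ref{plemma}. The proposal is correct and takes essentially the same approach as the paper; the first variant via the Herstein--Wedderburn classification is also valid but unnecessary, as you note.
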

\begin{proof}
Suppose $\card{G}1=0$ in $\D$.
Then $\D$ has prime characteristic $p$, say, and $p$ divides
$\card{G}$.
By a theorem of Cauchy \cite[Theorem~4.2]{Rotman}, $G$ has an element
of order $p$, which contradicts Lemma~\ref{plemma}.
\end{proof}

\begin{lemma}\label{technical}
Let $G$ be a finite subgroup of $(\D^*,\cdot)$, where $\D$ is a skew field.
Suppose that $G+a=Gb$ for some $a,b\in\D$.
Then either $a=0$ or $G=\{1\}$.
\end{lemma}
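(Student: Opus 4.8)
The plan is to exploit the hypothesis $G+a=Gb$ together with Lemma~\ref{0lemma} to pin down $b$ first, and then derive a contradiction from $a\ne 0$ by a counting/closure argument. First I would sum both sides over all elements. Summing $G+a$ gives $\sum_{g\in G}(g+a)=(\sum_{g\in G}g)+\card{G}a=\card{G}a$ by Lemma~\ref{0lemma}, while summing $Gb$ gives $(\sum_{g\in G}g)b=0$ again by Lemma~\ref{0lemma}. Hence $\card{G}a=0$ in $\D$. Now Lemma~\ref{nonzero} says $\card{G}\ne 0$ in $\D$ (i.e.\ $\card{G}1\ne 0$), and in a skew field $\card{G}1\ne 0$ together with $\card{G}1\cdot a = 0$ forces $a=0$ after multiplying by $(\card{G}1)^{-1}$.

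Wait — I should double-check the trivial-$G$ escape clause, since the statement allows $a\ne 0$ precisely when $G=\{1\}$: if $G=\{1\}$ then the condition $G+a=Gb$ just says $\{1+a\}=\{b\}$, which holds for any $a$ with $b=1+a$, so the conclusion "$a=0$ or $G=\{1\}$" is vacuously satisfied in that case and nothing needs proving. When $G$ is nontrivial, Lemma~\ref{0lemma} genuinely applies (it requires $G$ nontrivial), so the argument above goes through and yields $a=0$. So the proof splits cleanly: dispose of $G=\{1\}$ immediately, and for nontrivial $G$ run the summation argument.

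The only subtlety — and the step I'd flag as the main obstacle, though it's minor — is making sure $\card{G}$ is being read correctly as an element of $\D$ in the equation $\card{G}a=0$. The sum $\sum_{g\in G}(g+a)$ has $\card{G}$ terms equal to $a$, so the contribution is $a+a+\dots+a=(\card{G}1)a$ using the right-distributive law, which is fine in any skew field. Then Lemma~\ref{nonzero} gives $\card{G}1\in\D^*$, so it has a two-sided inverse, and $(\card{G}1)^{-1}(\card{G}1)a=a$ forces $a=0$. No commutativity is needed anywhere, so the argument is valid over skew fields as stated.

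\begin{proof}
If $G=\{1\}$ there is nothing to prove, so assume $G$ is nontrivial.
Summing the equation $G+a=Gb$ over all elements gives, on the left,
\[ \sum_{g\in G}(g+a)=\Bigl(\sum_{g\in G}g\Bigr)+(\card{G}1)a=(\card{G}1)a,\]
using Lemma~\ref{0lemma}, and on the right,
\[ \sum_{g\in G}gb=\Bigl(\sum_{g\in G}g\Bigr)b=0,\]
again by Lemma~\ref{0lemma}.
Hence $(\card{G}1)a=0$ in $\D$.
By Lemma~\ref{nonzero}, $\card{G}1\neq 0$, so $\card{G}1$ is invertible in the skew field $\D$, and multiplying on the left by its inverse yields $a=0$.
\end{proof}
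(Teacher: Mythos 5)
Your proof is correct and follows essentially the same route as the paper: both sum over the set equality $G+a=Gb$, apply Lemma~\ref{0lemma} to kill the two group sums, and invoke Lemma~\ref{nonzero} to cancel $\card{G}1$ and conclude $a=0$. The only difference is cosmetic --- the paper starts from $0=\sum_{g\in G}g$ and rewrites each term as $-a+gb$, while you sum the two sides separately and equate them.
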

\begin{proof}
Suppose $G$ is nontrivial.
Then
\begin{align*}
0 &= \sum_{g\in G} g \quad\text{(Lemma~\ref{0lemma})}\\
&= \sum_{g\in G} (-a+gb) = -\card{G}a+\left(\sum_{g\in G} g\right)b\\
&= -\card{G}a \quad\text{(again Lemma~\ref{0lemma})}.
\end{align*}
By Lemma~\ref{nonzero}, $a=0$.
\end{proof}

\section{Higher dimensions}\label{higher-dim}
Before we generalise Propositions~\ref{prop1}
and \ref{prop2}, we establish the following notation.
Let $(V,\setP,\setB)$ be a $\TD(3,n)$ embedded in $P^d(\D)$.
Thus $V\subseteq P^d(\D)$ and there exist three distinct hyperplanes 
$H_1, H_2, H_3$ of $P^d(\D)$ such that 
$\setP=\{V\cap H_1, V\cap H_2, V\cap H_3\}$.
We refer to this situation by saying that $(V,\setP,\setB)$ 
\emph{lies on the hyperplanes} $H_1, H_2, H_3$.
We now distinguish between the cases where the dimension of 
$H_1\cap H_2\cap H_3$ is $d-2$ or $d-3$.

If $\dim (H_1\cap H_2\cap H_3)=d-2$, then after choosing $1_1\in H_1\cap V$ 
and $1_2\in H_2\cap V$, we may choose homogeneous coordinates such that 
$H_1\cap H_2\cap H_3=\setbuilder{[\vx,0,0]}{\vx\in\D^{d-1}}$, 
$1_1=[\vo,0,1]$, $1_2=[\vo,1,1]$ and $1_3=[\vo,1,0]$.
(Recall that $\vo$ is the $(d-1)$-dimensional zero vector.)
Then $H_1$ has the equation $x_d=0$, $H_2$ the equation $x_d=x_{d+1}$, and 
$H_3$ the equation $x_{d+1}=0$.
The coordinates of the points in $V$ depend on the choices made above.
The next proposition describes all possibile coordinatisations.
\begin{proposition}\label{prop3}
Let $(V,\setP,\setB)$ be a $\TD(3,n)$ which lies on the hyperplanes $H_1$, 
$H_2$, $H_3$ of $P^d(\D)$ such that $\dim (H_1\cap H_2\cap H_3)=d-2$.
If we choose homogeneous coordinates as above, then there exists a subgroup 
$G$ of $(\D^{d-1},+)$ of order $n$ such that
\begin{equation}\label{coord3}
\left.
\begin{aligned}
 H_1\cap V&=\setbuilder{[\vgamma,0,1]}{\vgamma\in G},\\
 H_2\cap V&=\setbuilder{[\vgamma,1,1]}{\vgamma\in G},\\
 H_3\cap V&=\setbuilder{[-\vgamma,1,0]}{\vgamma\in G}.
\end{aligned}
\qquad\right\}
\end{equation}
The group $G$ depends only on the choice of coordinates.
For any two such choices, the two groups $G_1$ and $G_2$ so obtained, 
satisfy $G_1=TG_2a$ for some $a\in\D^*$ and $T\in GL_{d-1}(\D)$.

Conversely, given any subgroup $G$ of $(\D^{d-1},+)$ of order $n$, 
\eqref{coord3} gives an embedding of a $\TD(3,n)$ on the hyperplanes 
$H_1$, $H_2$, $H_3$ with equations $x_d=0$, $x_d=x_{d+1}$, $x_{d+1}=0$, 
respectively.
\end{proposition}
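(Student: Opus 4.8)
The plan is to set up the $\odot$-operation from the introduction in these coordinates and show it forces the stated shape. First I would fix coordinates as in the paragraph preceding the statement, so $H_1\colon x_d=0$, $H_2\colon x_d=x_{d+1}$, $H_3\colon x_{d+1}=0$, with $1_1=[\vo,0,1]$, $1_2=[\vo,1,1]$, $1_3=[\vo,1,0]$. A general point of $H_1\cap V$ is $[\vgamma,0,1]$ for some $\vgamma\in\D^{d-1}$; write $G:=\setbuilder{\vgamma\in\D^{d-1}}{[\vgamma,0,1]\in H_1\cap V}$, so $\vo\in G$ (this is $1_1$) and $\card G=n$. The heart of the argument is to compute, for $X=[\valpha,0,1]$ and $Y=[\vbeta,0,1]$ in $H_1\cap V$, the auxiliary points $X'=1_2X\cap P_3$ and $Y'=1_3Y\cap P_2$, and then $X\odot Y=X'Y'\cap P_1$, and to check that in these coordinates $X\odot Y$ corresponds to $\valpha+\vbeta$. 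Since $(P_1,\odot)$ is a loop of order $n$ and is here realised as $(\D^{d-1},+)$ restricted to $G$, the defining properties of a loop (unique solvability of $\vo\odot\vx=\vbeta$, i.e.\ closure under the relevant operations) together with $\card G=n$ will force $G$ to be a subgroup of $(\D^{d-1},+)$. Concretely, the line $1_1 X$ in $H_1$ meets $H_3$ and $H_2$ at predictable points, and chasing the three lines through the incidence relations of the $\TD$ shows the third coordinate pattern: the point of $H_2$ collinear with $1_3=[\vo,1,0]$ and $[\vbeta,0,1]$ is $[\vbeta,1,1]$, and the point of $H_3$ collinear with $1_2=[\vo,1,1]$ and $[\valpha,0,1]$ is $[-\valpha,1,0]$. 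This simultaneously pins down $H_2\cap V=\setbuilder{[\vgamma,1,1]}{\vgamma\in G}$ and $H_3\cap V=\setbuilder{[-\vgamma,1,0]}{\vgamma\in G}$, and identifies $\odot$ with $+$.

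Next I would address uniqueness. Two coordinatisations of the type described differ by a projective transformation of $P^d(\D)$ fixing the configuration $1_1,1_2,1_3$ and the three hyperplanes setwise in the prescribed way; such a transformation is represented by a matrix in $GL_{d+1}(\D)$ acting on the right on row vectors (up to a central scalar). Parametrising the subgroup of $GL_{d+1}(\D)$ that preserves the normal forms $x_d=0$, $x_d=x_{d+1}$, $x_{d+1}=0$ and fixes the three marked points, I expect to find that its action on the first $d-1$ coordinates of the points $[\vgamma,0,1]$ is exactly $\vgamma\mapsto T\vgamma a$ for some $a\in\D^*$ and $T\in GL_{d-1}(\D)$ (with the block acting on coordinates $d,d+1$ forced to be essentially the identity after rescaling). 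Hence $G_1=TG_2a$. That $G$ depends only on the coordinates, not on the choice of $1_1,1_2$, is immediate once the coordinates are fixed, since $G=\{\vgamma : [\vgamma,0,1]\in H_1\cap V\}$ is read off directly.

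Finally, for the converse I would take an arbitrary additive subgroup $G\le(\D^{d-1},+)$ of order $n$, \emph{define} $V$ by \eqref{coord3}, set $H_1,H_2,H_3$ by the three equations, and verify the $\TD(3,n)$ axioms: the three sets $H_i\cap V$ are disjoint and of size $n$ (injectivity of $\vgamma\mapsto[\vgamma,0,1]$ etc.), any two points in the same $H_i$ span a line inside $H_i$, and any two points in different parts — say $[\valpha,0,1]$ and $[\vbeta,1,1]$ — are joined by a line whose third intersection with the configuration is $[\valpha-\vbeta,\,\cdot\,,\cdot]$-type and lies in the remaining part precisely because $\valpha-\vbeta\in G$ (closure under subtraction); one also checks that this line meets $H_1\cap H_2\cap H_3$ in no point of $V$, so the blocks have size $3$ and the "no $\ell_B\subseteq H_P$" condition holds. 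Counting gives $n^2$ blocks, completing the transversal design.

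The main obstacle I anticipate is the uniqueness clause: correctly identifying the stabiliser in $PGL_{d+1}(\D)$ of the marked configuration and extracting the precise form $G_1=TG_2a$ requires care with the non-commutativity, in particular keeping scalars on the correct (right) side throughout, since a right-multiplication by $a$ on a row vector and a left-multiplication by a matrix $T$ do not interact the way they would over a field. The coordinate chase for $\odot=+$ is routine once the conventions are fixed, and the converse is a direct verification, but the transformation-group bookkeeping is where the genuine work lies.
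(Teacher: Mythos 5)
Your proposal is correct and follows essentially the same route as the paper: identify $G$ as the first-coordinate set of $H_1\cap V$, compute $X'=[-\valpha,1,0]$, $Y'=[\vbeta,1,1]$ and $X\odot Y=[\valpha+\vbeta,0,1]$ to see that $\odot$ is addition (whence $G$ is a finite additive subgroup and the coordinates of $H_2\cap V$, $H_3\cap V$ are read off from $X'$, $Y'$), then obtain $G_1=TG_2a$ from the coordinate change $[\vx,y,z]\mapsto[T\vx,ay,az]$, and verify the converse directly. The only caveat is that the coordinate change need not fix the marked points when a different $1_1$, $1_2$ is chosen, so a translation by an element of $G$ can also appear; this is harmless since $G+\vgamma_0=G$, and the paper elides the same point.
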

\begin{proof}
We show that the operation $\odot$ defined in the introduction corresponds 
with addition in $\D^{d-1}$.
Let $G=\setbuilder{\vgamma}{[\vgamma,0,1]\in h_1\cap V}$.
Note that $\vo\in G$.
For any $\valpha,\vbeta\in G$, let $X=[\valpha,0,1]$ and $Y=[\vbeta,0,1]$.
Then a simple calculation shows that $X'=[-\valpha,1,0]$, $Y'=[\vbeta,1,1]$, 
and $X\odot Y=[\valpha+\vbeta,0,1]$.
Therefore, $\valpha+\vbeta\in G$, and $\odot$ corresponds to addition in 
$\D^{d-1}$, restricted to $G$.
Thus $(G,+)$ is a group.
Also, $H_1\cap V$ has homogeneous coordinates as stated.
We furthermore obtain that $H_2\cap V$ and $H_3\cap V$ are as stated, by 
considering the coordinates  of the points $X'$ and $Y'$.

A calculation shows that for any two choices of coordinates as above, the 
coordinate transformation between them is $[\vx,y,z]\mapsto[T\vx,ay,az]$ for 
some $a\in\D^*$ and $T\in GL_{d-1}(\D)$.
Thus $[\vgamma,0,1]$ is mapped to $[T\vgamma,0,a]=[T\vgamma a^{-1},0,1]$, 
which gives a new group $G'=TGa^{-1}$.

The proof of the converse, that \eqref{coord3} gives a $\TD(3,n)$ for any 
subgroup $G$ of $(\D^{d-1},+)$ of order $n$, is a simple calculation.
\end{proof}

If $\D^{d-1}$ contains a finite additive group $G$, then
$\D$ has prime characteristic $p$, say, and $G$ is an $\F_p$-vector
subspace of $\D^{d-1}$ (and therefore isomorphic to a direct sum of
finitely many copies of $\Z_p$).

\begin{corollary}\label{cor3}
Let a $\TD(3,n)$ lie on three hyperplanes in $P^d(\D)$.
\begin{itemize}
 \item If $\D$ has characteristic $0$, the three hyperplanes intersect in 
 a $(d-3)$-flat.
\item If $\D$ has prime characteristic $p$ and the three hyperplanes intersect 
in a $(d-2)$-flat, then $n$ is a power of $p$.
\end{itemize}
\end{corollary}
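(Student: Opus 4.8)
The plan is to deduce both statements directly from Proposition~\ref{prop3} together with the structural remark immediately preceding the corollary, namely that a finite additive subgroup $G$ of $\D^{d-1}$ forces $\D$ to have prime characteristic $p$ and makes $G$ an $\F_p$-vector space. First I would dispose of the case where the three hyperplanes meet in a $(d-3)$-flat: there is nothing to prove, since the statement only constrains $\D$ and $n$ in the complementary case. So I would fix attention on a $\TD(3,n)$ lying on $H_1,H_2,H_3$ with $\dim(H_1\cap H_2\cap H_3)=d-2$.

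The key step is to invoke Proposition~\ref{prop3}. After choosing homogeneous coordinates in the prescribed way (the coordinate change is harmless, as Proposition~\ref{prop3} guarantees such coordinates exist precisely when $\dim(H_1\cap H_2\cap H_3)=d-2$), the proposition hands us a subgroup $G$ of $(\D^{d-1},+)$ with $\card{G}=n$. Since $n\geq 3>1$, this $G$ is a nontrivial \emph{finite} additive subgroup of $\D^{d-1}$. Now I would apply the structural remark: $\D$ cannot have characteristic $0$, because in characteristic $0$ the additive group $(\D,+)$, and hence $(\D^{d-1},+)$, is torsion-free and contains no nontrivial finite subgroup. This proves the first bullet: if $\D$ has characteristic $0$, the only remaining possibility is that the three hyperplanes meet in a $(d-3)$-flat.

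For the second bullet, assume $\D$ has prime characteristic $p$ and the hyperplanes meet in a $(d-2)$-flat. Again Proposition~\ref{prop3} gives a finite additive subgroup $G$ of $\D^{d-1}$ of order $n$. Since every nonzero element of $(\D,+)$ has additive order $p$ (this is what prime characteristic $p$ means), the same holds in $\D^{d-1}$, so $G$ is a finite abelian group in which every nonidentity element has order $p$; equivalently $G$ is an $\F_p$-vector space, necessarily finite-dimensional, say of dimension $m$. Hence $n=\card{G}=p^m$ is a power of $p$, as claimed.

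There is no real obstacle here; the corollary is an immediate packaging of Proposition~\ref{prop3} plus the elementary fact that a vector space over a field of characteristic $0$ has no nontrivial finite subgroups while one over $\F_p$ has only $p$-power-order finite subgroups. The only point worth stating carefully is that the coordinate-dependence in Proposition~\ref{prop3} is irrelevant: the conclusion ``$n$ is a power of $p$'' depends only on $\card{G}=n$, which is the number of points in a part and is therefore independent of the coordinatisation; likewise the non-existence of $G$ in characteristic $0$ is coordinate-free.
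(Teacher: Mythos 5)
Your proposal is correct and matches the paper's intended argument exactly: the paper states Corollary~\ref{cor3} without a separate proof precisely because it follows from Proposition~\ref{prop3} together with the remark immediately preceding it (a finite additive subgroup of $\D^{d-1}$ forces prime characteristic $p$ and is an $\F_p$-vector space, hence of $p$-power order). Your handling of the contrapositive for the characteristic-$0$ case and the coordinate-independence remark are both sound and add nothing beyond what the paper implicitly relies on.
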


\bigskip
Now we consider the case where $\dim (H_1\cap H_2\cap H_3)=d-3$.
After choosing $1_1\in H_1\cap V$ and $1_2\in H_2\cap V$, we may choose 
homogeneous coordinates such that  $1_1=[0,1,1,\vo]$, and $1_2=[1,0,1,\vo]$, 
$1_3=[-1,1,0,\vo]$, and such that $H_1$ has equation $x_1=0$, 
$H_2$ equation $x_2=0$, and $H_3$ equation $x_3=0$.
(Here $\vo$ is the $(d-2)$-dimensional zero vector.)
Again, the coordinates of the points in $V$ depend on the choices made above.
The next proposition describes all possible coordinatisations.
As in the two-dimensional case, there is a group associated with the 
$\TD(3,n)$, but this group now has a more complicated structure.
Define an operation on the Cartesian product $\D^*\times \D^{d-2}$ by
\[ (\alpha,\vx)\cdot(\beta,\vy):=(\alpha\beta,\vx\beta+\vy).\]
Then $\D^*\ltimes \D^{d-2}=(\D^*\times \D^{d-2},\cdot)$ is a semidirect 
product of $\D^*$ with $\D^{d-2}$ \cite[p.~137]{Rotman}, and can be faithfully 
represented in $GL_{d-1}(\D)$ by mapping $(\gamma,\vx)$ to
\[ 
\begin{bmatrix}
\gamma & \vo \\ 
\vx & I_{d-2}
\end{bmatrix}
.             \]
For any $T\in GL_{d-2}(\D)$ there is an automorphism
\[\phi_{T}: (\gamma,\vx)\mapsto (\gamma, T\vx)\]
of $\D^*\ltimes\D^{d-2}$.
\begin{proposition}\label{prop4}
Let $(V,\setP,\setB)$ be a $\TD(3,n)$ which lies on the hyperplanes $H_1$, 
$H_2$, $H_3$ of $P^d(\D)$ such that $\dim(H_1\cap H_2\cap H_3)=d-3$.
If we choose homogeneous coordinates as above, then there exists a subgroup 
$G$ of $\D^*\ltimes \D^{d-2}$ of order $n$ such that
\begin{equation}\label{coord4}
\left.
\begin{aligned}
 H_1\cap V&=\setbuilder{[0,\gamma,1,\vx]}{(\gamma,\vx)\in G},\\
 H_2\cap V&=\setbuilder{[\gamma,0,1,\vx]}{(\gamma,\vx)\in G},\\
 H_3\cap V&=\setbuilder{[-1,\gamma,0,\vx]}{(\gamma,\vx)\in G}.
\end{aligned}
\qquad\right\}
\end{equation}
The group $G$ depends only on the choice of coordinates.
For any two such choices, the two groups $G_1$ and $G_2$ so obtained 
satisfy $G_1=(a,\vv)\cdot\phi_{T}G_2\cdot(a,\vv)^{-1}$ for some 
$(a,\vv)\in\D^*\ltimes\D^{d-2}$ and $T\in GL_{d-2}(\D)$.

Conversely, given any subgroup $G$ of $\D^*\ltimes \D^{d-2}$ of order $n$, 
\eqref{coord4} gives an embedding of a $\TD(3,n)$ on the hyperplanes 
$H_1$, $H_2$, $H_3$ with equations $x_1=0$, $x_2=0$, $x_3=0$, respectively.
\end{proposition}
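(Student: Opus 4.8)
The plan is to follow the proof of Proposition~\ref{prop3}, with the semidirect product operation $(\alpha,\vx)\cdot(\beta,\vy)=(\alpha\beta,\vx\beta+\vy)$ on $\D^*\ltimes\D^{d-2}$ playing the role that addition in $\D^{d-1}$ played there. First I would put the points of $P_1=V\cap H_1$ into normal form. Since no point of $\fhi(V)$ can lie on $H_i\cap H_j$ for $i\neq j$, such a point has $x_1=0$, $x_2\neq0$ and $x_3\neq0$, and hence, after right multiplication by $x_3^{-1}$, is uniquely of the form $[0,\alpha,1,\vx]$ with $\alpha\in\D^*$ and $\vx\in\D^{d-2}$. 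Put $G:=\setbuilder{(\alpha,\vx)}{[0,\alpha,1,\vx]\in P_1}$; then $(\alpha,\vx)\mapsto[0,\alpha,1,\vx]$ is a bijection onto $P_1$, so $\card G=n$, and $1_1=[0,1,1,\vo]$ shows that $(1,\vo)\in G$.

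Next I would compute the loop operation $\odot$ of the introduction in these coordinates, using $1_2=[1,0,1,\vo]$ and $1_3=[-1,1,0,\vo]$. For $X=[0,\alpha,1,\vx]$ and $Y=[0,\beta,1,\vy]$ in $P_1$, routine line-intersection calculations give $X'=1_2X\cap P_3=[-1,\alpha,0,\vx]$, $Y'=1_3Y\cap P_2=[\beta,0,1,\vy]$, and hence $X\odot Y=X'Y'\cap P_1=[0,\alpha\beta,1,\vx\beta+\vy]$. In terms of the parametrisation this reads $(\alpha,\vx)\odot(\beta,\vy)=(\alpha\beta,\vx\beta+\vy)=(\alpha,\vx)\cdot(\beta,\vy)$, so $\odot$ restricted to $G$ coincides with the semidirect product operation. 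Since $X\odot Y\in P_1$ by the definition of $\odot$, the set $G$ is closed under that operation; a finite nonempty subset of a group that is closed under the group operation is a subgroup, so $G$ is a subgroup of $\D^*\ltimes\D^{d-2}$ of order $n$. The same two formulas show that, as $X$ and $Y$ range over $P_1$, the points $X'$ and $Y'$ range over all of $P_3=V\cap H_3$ and $P_2=V\cap H_2$ respectively (these being the bijections $P_1\to P_3$ and $P_1\to P_2$ furnished by the block structure), which yields the asserted coordinates for $H_2\cap V$ and $H_3\cap V$.

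For the uniqueness assertion I would determine the projective transformations relating two normalised coordinate systems. Such a transformation is represented by a matrix $A$ that maps each of the coordinate hyperplanes $\{x_i=0\}$, $i=1,2,3$, onto itself, which forces the $3+(d-2)$ block form $A=\left[\begin{smallmatrix}\Lambda&0\\ C&B\end{smallmatrix}\right]$ with $\Lambda=\mathrm{diag}(\mu_1,\mu_2,\mu_3)$ and $B\in GL_{d-2}(\D)$; imposing in addition that $1_1$ and $1_2$ (hence also $1_3$) be fixed forces $\mu_1=\mu_2=\mu_3$, say all equal to $\mu$, and the three columns of $C$ to be $\vc,\vc,-\vc$ for a single $\vc\in\D^{d-2}$. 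Applying $A$ to $[0,\alpha,1,\vx]$ and renormalising then gives $G_1=(a,\vv)\cdot\phi_T(G_2)\cdot(a,\vv)^{-1}$ with $a=\mu$, $\vv=\vc$ and $T=B$; allowing the initial choices $1_1\in H_1\cap V$, $1_2\in H_2\cap V$ to vary as well only alters the values of $\mu$ and $\vc$ (which then also involve the parameters in $G$ of the new base points), the relation remaining of the same form, just as for the conjugacy statement in Proposition~\ref{prop2}. Finally, the converse is a direct computation of the kind carried out for Proposition~\ref{prop3}: with $V$ and $\setP$ as in \eqref{coord4}, and taking for each ordered pair $g,h\in G$ the block consisting of the $P_3$-point with parameter $g$, the $P_2$-point with parameter $h$, and the $P_1$-point with parameter $gh$, one checks collinearity, the $\TD(3,n)$ axioms, and the incidence requirements of an embedding hyperplane by hyperplane.

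I expect the main obstacle to be the bookkeeping in the uniqueness part: pinning down exactly the group of normalised-coordinate changes, and confirming that varying the base points enlarges the ambiguity by nothing beyond the conjugation already present, requires the matrix computation sketched above, where the non-commutativity of $\D$ must be watched closely. Identifying $\odot$ with the semidirect product operation, by contrast, is just a short sequence of line intersections.
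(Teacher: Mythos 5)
Your proposal follows essentially the same route as the paper's proof: the same normal forms for points of $P_1$, the same computation of $X'$, $Y'$, and $X\odot Y=[0,\alpha\beta,1,\vx\beta+\vy]$ identifying $\odot$ with the semidirect product operation, and the same analysis of the normalised coordinate changes yielding $G_1=(a,\vv)\cdot\phi_T G_2\cdot(a,\vv)^{-1}$. Your version merely fills in a few details the paper leaves as ``a calculation'' (the block form of the allowed transformations and the explicit blocks in the converse), all of which check out.
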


\begin{proof}
We calculate the loop operation $\odot$.
Let \[G:=\setbuilder{(\gamma,\vx)\in\D^*\times\D^{d-2}}%
{[0,\gamma,1,\vx]\in V\text{ for some }\vx\in\D^{d-2}}.\] 
Choose $X=[0,\alpha,1,\vx], Y=[0,\beta,1,\vy]\in H_1\cap V$.
Then easy calculations show that $X'=[-1,\alpha,0,\vx]$,
$Y'=[\beta,0,1,\vy]$, and $X\odot Y=[0,\alpha\beta,1,\vx\beta+\vy]$.
This shows that $G$ is a subgroup of $\D^*\ltimes\D^{d-2}$, 
and that the coordinates
of the $H_i\cap V$ are as stated.

A calculation shows that for any two choices of coordinates as above, 
the coordinate transformation between them is
\[ [\alpha,\beta,\gamma,\vx] \mapsto 
[a\alpha,a\beta,a\gamma,\vv(\alpha+\beta-\gamma)+T\vx] \]
for some $a\in\D^*$, $\vv\in\D^{d-2}$ and $T\in GL_{d-2}(\D)$.
Then $[0,\gamma,1,\vx]$ is mapped to
\begin{align*}
\quad & [0,a\gamma,a,\vv\gamma-\vv+T\vx]\\
&= [0,a\gamma a^{-1},1,(\vv\gamma-\vv+T\vx)a^{-1}]\\
&= [0,\beta,1,\vy]
\end{align*}
where $(\beta,\vy)=(a,\vv)\cdot(\gamma,T\vx)\cdot(a,\vv)^{-1}$,
which gives a new group $G'=(a,\vv)\cdot\phi_T G\cdot(a,\vv)^{-1}$.

The proof of the converse, that \eqref{coord4} gives a $\TD(3,n)$ for any 
subgroup $G$ of $\D^*\ltimes \D^{d-2}$ of order $n$, is again a simple 
calculation.
\end{proof}

\begin{proposition}\label{th2}
Consider an embedding of a Latin square of side $n\geq 3$ with transversal 
in $P^d(\D)$ with
transversal point $\infty$, such that the three hyperplanes of the
embedding intersect in a $(d-3)$-flat.
Then the embedding lies in a hyperplane passing through
$\infty$.
In particular, if $d=2$, such an embedding does not exist.
\end{proposition}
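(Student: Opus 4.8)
The plan is to coordinatise the embedding by Proposition~\ref{prop4}, to extract from the concurrency of the transversal lines one identity among first coordinates, and then to apply Lemma~\ref{technical} to force the ``multiplicative part'' of the associated group to be trivial; the required hyperplane then drops out at once.

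First I would apply Proposition~\ref{prop4}: choosing homogeneous coordinates so that $H_1,H_2,H_3$ have equations $x_1=0$, $x_2=0$, $x_3=0$, we get a subgroup $G$ of $\D^*\ltimes\D^{d-2}$ of order $n$ with the $H_i\cap V$ as in \eqref{coord4}. Write $\infty=[p,q,r,\vect{p}]$ with $\vect{p}\in\D^{d-2}$. I would first observe that $\infty$ lies on none of the $H_i$, so that $p,q,r\in\D^*$: if, say, $\infty\in H_1$, then (as $n\ge 3$) some block of the transversal partition $\setT$ has its point of $H_1\cap V$ distinct from $\infty$, and the line of that block then contains two distinct points of $H_1$, hence lies in $H_1$, contradicting the definition of an embedding.

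Next, for each $g=(\gamma,\vx)\in G$, let $B_g$ be the block of $\setT$ through $[0,\gamma,1,\vx]$. By \eqref{coord4} its point of $H_2\cap V$ is $[\delta,0,1,\vy]$ for a well-defined $\pi(g):=(\delta,\vy)\in G$, and a short coordinate computation identifies its point of $H_3\cap V$, via \eqref{coord4}, with $g\,\pi(g)^{-1}\in G$. Since the line $\ell_{B_g}$ passes through $\infty$, writing $\infty$ as a combination of $[0,\gamma,1,\vx]$ and $[\delta,0,1,\vy]$ and comparing the first three coordinates yields $\gamma^{-1}q+\delta^{-1}p=r$, i.e.\ $\gamma\delta^{-1}=(\gamma r-q)p^{-1}$; equivalently, the first coordinate of $g\,\pi(g)^{-1}$ is $(\gamma r-q)p^{-1}$. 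Letting $G_1\le\D^*$ be the image of $G$ under $(\gamma,\vx)\mapsto\gamma$, a finite subgroup of $\D^*$, the fact that $g\,\pi(g)^{-1}\in G$ now says that the injective affine map $\gamma\mapsto(\gamma r-q)p^{-1}$ carries $G_1$ into, hence (by finiteness) onto, $G_1$. Thus $G_1 r-q=G_1 p$, that is $G_1+(-qr^{-1})=G_1\,(pr^{-1})$, and since $-qr^{-1}\ne 0$ Lemma~\ref{technical} gives $G_1=\{1\}$.

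Finally, with $G_1=\{1\}$ every point of $V$ has the coordinates of \eqref{coord4} with first group-coordinate $1$, and one checks directly that all of them lie on the hyperplane $x_1+x_2-x_3=0$; the identity $\gamma^{-1}q+\delta^{-1}p=r$ with $\gamma=\delta=1$ gives $p+q=r$, so $\infty$ lies on it too. This is the claimed hyperplane. For $d=2$ a hyperplane is a line, and a $\TD(3,n)$ with $n\ge 3$ lying on three distinct lines cannot be contained in a single line---it would then meet at least two of those lines in at most one point, contradicting $\abs{h_i\cap V}=n$---so no such embedding exists. The step I expect to be the main obstacle is the coordinate computation in the third paragraph: determining exactly which element of $G$ the third point of a transversal block represents, and carrying the noncommutative bookkeeping through to the clean identity $\gamma\delta^{-1}=(\gamma r-q)p^{-1}$. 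Recognising that identity as an instance of the hypothesis of Lemma~\ref{technical} is the short but decisive observation.
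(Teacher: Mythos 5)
Your proposal is correct and takes essentially the same route as the paper: coordinatise via Proposition~\ref{prop4}, use the concurrency of the transversal lines at $\infty$ to derive a relation of the form $G_1+a=G_1b$ for the projection $G_1$ of $G$ onto its first coordinate, apply Lemma~\ref{technical} to force $G_1=\{1\}$, and read off the hyperplane $x_1+x_2-x_3=0$. The only differences are cosmetic (you parametrise by transversal blocks where the paper projects $H_1\cap V$ from $\infty$ onto $H_3$, and you normalise $\infty$ differently), and your verifications that $\infty$ lies on the hyperplane and that the $d=2$ case is impossible are correct.
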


\begin{proof}
Suppose that $(V,\setP,\setB)$ lies on three hyperplanes $H_1, H_2, H_3$ 
that intersect in a $(d-3)$-flat.
Let $G$ be the group given by Proposition~\ref{prop4}.
The subgroup
\[G_1=\setbuilder{\gamma\in\D^*}%
{[0,\gamma,1,\vx]\in H_1\cap V \text{ for some }\vx\in\D^{d-2}}\]
of $D^*$ is a homomorphic image of the $p$-group $G$, and is therefore also
a $p$-group.
By Lemma~\ref{plemma}, $G_1$ is trivial.
Since the transversal point does not lie on any $H_i$, we may write its 
homogeneous coordinates as $[1,a,b,\vc]$ for some $a,b\in\D^*$ and 
$\vc\in\D^{d-2}$.
For any $\gamma\in G_1$ and $X=[0,\gamma,1,\vx]\in H_1\cap V$, a calculation 
then shows that the projection of $X$ from the transversal point 
$[1,a,b,\vc]$ onto $H_3$ is $[-1,\gamma b-a,0,\vx b-\vc]$.
This gives $G_1 b-a\subseteq G_1$.
Since any point in $H_3\cap V$ is such a projection of some point in 
$H_1\cap V$, we in fact have equality: $G_1 b=G_1+a$.
By Lemma~\ref{technical}, 
$G_1=\{1\}$ and $b-a=1$.
The coordinates given by Proposition~\ref{prop4} become
\begin{align*}
 H_1\cap V&=\setbuilder{[0,1,1,\vx]}{\vx\in H},\\
 H_2\cap V&=\setbuilder{[1,0,1,\vx]}{\vx\in H},\\
 H_3\cap V&=\setbuilder{[-1,1,0,\vx]}{\vx\in H},
\end{align*}
and it follows that $V$ lies on the hyperplane $x_1+x_2-x_3=0$.
\end{proof}

Similar to the two-dimensional case, if $G$ is any finite subgroup of 
$(\D^{d-1},+)$ and $a\in\D$, then $Ga$ is also a subgroup of $(\D^{d-1},+)$, 
and $\D_G:=\setbuilder{a\in\D}{Ga\subseteq G}$ is a subfield of $\D$.
As before, $(\D_G,+)$ is isomorphic to a subgroup of the $p$-group $G$, hence 
is a finite $p$-group itself.
This can be seen as follows.
Choose a coordinate $i\in\{1,\dots,d-1\}$ such that the projection $G_i$ of 
$G$ onto this coordinate is a nontrivial subgroup of $(\D,+)$.
Then $\D_{G}\subseteq\D_{G_i}\subseteq g^{-1}G$ for any $g\in G\setminus\{0\}$.
\begin{theorem}\label{thm4}
Let $(V,\setP,\setB)$ be a $\TD(3,n)$ with transversal point $\infty$ with a 
proper embedding into
three hyperplanes $H_1, H_2, H_3$ of $P^d(\D)$.
Then $\dim(H_1\cap H_2\cap H_3)=d-2$, and if
homogeneous coordinates are chosen as in Proposition~\ref{prop3} with $G$ the 
group associated with the embedding, then the transversal point 
$\infty=[\vgamma,a,1]$, where $\vgamma\in G$ and $a\in\D_G\setminus\{0,1\}$.
Conversely, any point with these coordinates is a transversal point.

In particular, a transversal point lies on a line with equation $x_d=ax_{d+1}$ 
for some $a\in\D_G\setminus\{0,1\}$.
A Latin square embedded in three hyperplanes that intersect in a $(d-2)$-flat 
has a transversal if and only if $\card{\D_G}\geq 3$.
\end{theorem}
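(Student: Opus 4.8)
The plan is to reduce to the explicit coordinate model of Proposition~\ref{prop3} and then convert the concurrency of the transversal lines into an algebraic condition on the coordinates of $\infty$. First I would settle the dimension: by the dichotomy recorded before Proposition~\ref{prop3}, $\dim(H_1\cap H_2\cap H_3)$ is either $d-2$ or $d-3$, and in the second case Proposition~\ref{th2} forces the (proper) embedding onto a hyperplane through $\infty$, a contradiction; so the dimension is $d-2$ and Proposition~\ref{prop3} gives coordinates in which the $H_i\cap V$ are as in \eqref{coord3} for a subgroup $G$ of $(\D^{d-1},+)$ of order $n$. I would also check that $\infty$ lies on none of the $H_i$: if $\infty\in H_1$, pick a transversal block $B$ whose $H_1$-point is not $\infty$ (possible since the $n\geq 3$ transversal blocks have distinct $H_1$-points); then $\ell_B$ contains the two distinct points $\infty$ and the $H_1$-point of $B$, both lying on $H_1$, so $\ell_B\subseteq H_1$, contradicting the definition of embedding, and symmetrically for $H_2,H_3$. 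Since $\infty\notin H_3$ we may write $\infty=[\vgamma,a,1]$ with $\vgamma\in\D^{d-1}$ and $a\in\D$, and since $\infty\notin H_1\cup H_2$ we get $a\neq 0$ and $a\neq 1$.

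Next I would describe transversals in these coordinates. The block through $[\valpha,0,1]\in H_1$ and $[\vbeta,1,1]\in H_2$ has third point $[-(\valpha-\vbeta),1,0]\in H_3$, and its line is parametrised by $[\valpha s+\vbeta t,\,t,\,s+t]$, $s,t\in\D$. Hence a transversal is the same thing as a map $\pi\colon G\to G$, $\valpha\mapsto\vbeta$, for which both $\pi$ and $\valpha\mapsto\valpha-\pi(\valpha)$ are bijections (the three conditions saying that the $H_i$-points of the transversal blocks exhaust $P_i$, $i=1,2,3$). Substituting $[\vgamma,a,1]$ into the parametrisation and keeping each scalar on the right of its vector, the condition $\infty\in\ell_B$ reduces to $\vgamma=\valpha(1-a)+\vbeta a$; solving for $\vbeta$ and simplifying $(1-a)a^{-1}=a^{-1}-1$ gives
\[\pi(\valpha)=\valpha(1-a^{-1})+\vgamma a^{-1}.\]
Thus $[\vgamma,a,1]$ is the transversal point of the transversal determined by $\pi$ exactly when $\pi$ has this form.

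Now I would read off the constraints. Setting $\valpha=\vo$ in the displayed identity forces $\vgamma a^{-1}\in G$; then $\valpha(1-a^{-1})=\pi(\valpha)-\vgamma a^{-1}\in G$ for all $\valpha\in G$, i.e.\ $1-a^{-1}\in\D_G$, which (as $\D_G$ is a subfield containing $1$) is the same as $a^{-1}\in\D_G$; hence $a\in\D_G\setminus\{0,1\}$, and then $\vgamma=(\vgamma a^{-1})a\in Ga\subseteq G$. This shows every transversal point has the asserted coordinates. For the converse, given $\vgamma\in G$ and $a\in\D_G\setminus\{0,1\}$, define $\pi$ by the displayed formula. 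Since $a^{-1}\in\D_G$ we have $\vgamma a^{-1}\in G$, $1-a^{-1}\in\D_G$, and $\valpha(1-a^{-1})\in G$ for all $\valpha\in G$, so $\pi$ is a well-defined self-map of $G$; moreover right multiplication by the nonzero element $1-a^{-1}$ (nonzero because $a\neq 1$) is injective, hence permutes the finite set $G$, so $\pi$ is a bijection, and $\valpha\mapsto\valpha-\pi(\valpha)=(\valpha-\vgamma)a^{-1}$ is a translate of the permutation $\valpha\mapsto\valpha a^{-1}$ of $G$, hence also a bijection. Therefore the $n$ blocks indexed by $\valpha\in G$ with $H_2$-index $\pi(\valpha)$ are pairwise disjoint, cover $V$, and all their lines pass through $[\vgamma,a,1]$, so $[\vgamma,a,1]$ is a transversal point.

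The two ``in particular'' claims then follow at once: $[\vgamma,a,1]$ satisfies $x_d=ax_{d+1}$ with $a\in\D_G\setminus\{0,1\}$; and the Latin square has a transversal if and only if some $a\in\D_G\setminus\{0,1\}$ exists, which --- taking $\vgamma=\vo$ in the converse --- is equivalent to $\card{\D_G}\geq 3$, an equivalence that uses neither properness nor the first part of the theorem since Proposition~\ref{prop3} and the computations above do not. The step I expect to be the main obstacle is purely one of bookkeeping: carrying out the collinearity computation accurately in the non-commutative setting (every scalar must sit on the right of its vector) and seeing that ``$\pi$ and $\valpha\mapsto\valpha-\pi(\valpha)$ are bijections of $G$'' collapses to exactly ``$a\in\D_G$ and $\vgamma\in G$''.
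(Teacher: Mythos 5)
Your proof is correct and follows essentially the same route as the paper's: reduce the dimension claim to Proposition~\ref{th2}, pass to the coordinates of Proposition~\ref{prop3}, and translate concurrency of the transversal lines through $\infty=[\vgamma,a,1]$ into the condition $G a+\vgamma=G$ (you project between $H_1$ and $H_2$ where the paper projects $H_3$ onto $H_1$, and you spell out the converse and the fact that $\infty$ avoids the $H_i$, which the paper leaves as easy checks).
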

\begin{proof}
By Proposition~\ref{th2}, $\dim(H_1\cap H_2\cap H_3)=d-2$.
Consider the group $G$ and coordinates as in Proposition~\ref{prop3}.
Since the transversal point $\infty\notin H_3$, we may write its coordinates 
as $[\valpha,\beta,1]$.
The line through $\infty$ and an arbitrary point $[-\vgamma,1,0]\in H_3\cap V$ 
intersects $H_1$ in $[\vgamma\beta+\valpha,0,1]$.
Since this point is in $V$, it has to be of the form $[\vgamma',0,1]$, and 
therefore, $\setbuilder{\vgamma\beta+\valpha}{\vgamma\in G}=G$, i.e., 
$G\beta+\valpha=G$.
It follows that $\valpha\in G$ and $\beta\in\D_G$.
Since $[\valpha,\beta,1]\notin H_1, H_2$, it follows that $\beta\neq 0,1$.

It is easily checked that for any $\valpha\in G$ and 
$\beta\in\D_G\setminus\{0,1\}$, the lines through $[\valpha,\beta,1]$ define a 
transversal of the Latin square. 
\end{proof}

\begin{corollary}\label{cor4}
Suppose that $(V,\setP,\setB)$ is a $\TD(3,n)$ that lies on three hyperplanes 
of $P^d(\D)$ that intersect in a $(d-3)$-flat.
If $(V,\setP,\setB)$ is also embeddable in three hyperplanes of $P^d(\D)$ that 
intersect in a $(d-2)$-flat, then the embedding of $V$ is not proper.
\end{corollary}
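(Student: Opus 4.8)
The plan is to play the two coordinatisations of Propositions~\ref{prop3} and~\ref{prop4} against each other. The key observation is that the existence of the $(d-2)$-flat embedding forces $n$ to be a prime power, which in turn collapses the multiplicative part of the group attached to the given $(d-3)$-flat embedding, exactly as in the proof of Proposition~\ref{th2} but without needing a transversal.

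First I would use the hypothesis that $(V,\setP,\setB)$ is \emph{also} embeddable in three hyperplanes meeting in a $(d-2)$-flat. Applying Proposition~\ref{prop3} to that embedding produces a subgroup of $(\D^{d-1},+)$ of order $n$; since $\D^{d-1}$ then contains a nontrivial finite additive subgroup, $\D$ has prime characteristic $p$ and $n=p^m$ for some $m\geq 1$ (this is the remark following Proposition~\ref{prop3}, or Corollary~\ref{cor3}).

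Next I would turn to the given embedding on three hyperplanes $H_1,H_2,H_3$ with $\dim(H_1\cap H_2\cap H_3)=d-3$, choose coordinates as in Proposition~\ref{prop4}, and let $G\leq\D^*\ltimes\D^{d-2}$ be the associated group, of order $n=p^m$. Then $G$ is a $p$-group, so its image under the projection $(\gamma,\vx)\mapsto\gamma$ is a finite $p$-subgroup of $\D^*$, hence trivial by Lemma~\ref{plemma} together with Cauchy's theorem (this is precisely the opening step of the proof of Proposition~\ref{th2}). Consequently $G=\setbuilder{(1,\vx)}{\vx\in H}$ for some additive subgroup $H\leq(\D^{d-2},+)$, and substituting $\gamma=1$ into \eqref{coord4} shows that every point of $V$ has coordinates of one of the forms $[0,1,1,\vx]$, $[1,0,1,\vx]$, $[-1,1,0,\vx]$, all of which lie on the hyperplane $x_1+x_2-x_3=0$. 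Thus $\fhi(V)$ is contained in a hyperplane and the given embedding is not proper.

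The step most prone to error here is not a computation but the bookkeeping: one must keep straight that it is the $(d-3)$-flat embedding (the one $V$ is assumed to lie on) that is shown to be improper, the prime-power constraint being supplied "for free" by the mere existence of the other embedding. Everything else is a citation or the one-line substitution into \eqref{coord4} that already appears in Proposition~\ref{th2}. Finally, specialising to $d=2$ recovers Corollary~\ref{cor2}, since a $\TD(3,n)$ with $n\geq 3$ lying on three distinct lines cannot lie on a single line, so the conclusion "not proper" becomes vacuous and forces the non-existence of the second embedding.
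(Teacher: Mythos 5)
Your proposal is correct and follows essentially the same route as the paper: invoke Corollary~\ref{cor3} (via the $(d-2)$-flat embedding) to get characteristic $p$ and $n=p^m$, then show the multiplicative projection of the group from Proposition~\ref{prop4} is a trivial $p$-subgroup of $\D^*$ by Lemma~\ref{plemma} and Cauchy, so that $V$ lies on $x_1+x_2-x_3=0$. The paper's proof is just a terser version of the same argument, citing the corresponding steps in the proof of Proposition~\ref{th2}.
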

\begin{proof}
By Corollary~\ref{cor3}, if the $\TD(3,n)$ is embeddable in three hyperplanes 
that intersect in a $(d-2)$-flat, then $\D$ has prime characteristic $p$, and 
$n=p^k$ for some $k\geq 1$ and $G$ is a $p$-group.
If it furthermore lies on three hyperplanes that intersect in a $(d-3)$-flat, 
then consider the subgroup $G$ of $\D^*\ltimes\D^{d-2}$ given by 
Proposition~\ref{prop4}.
Define $G_1$ as in the proof of Proposition~\ref{th2}.
As before, $G_1$ is trivial.
As in the proof of Proposition~\ref{th2} it follows that $V$ 
(as well as the transversal point) is contained in the hyperplane
$x_1+x_2-x_3=0$.
\end{proof}
The next theorem is a generalisation of Bruen and Colbourn's Theorem 5.1 
\cite{BrCol}.
(Note that in their Theorem~5.1 it should be assumed that the embeddings are 
proper, as is already clear from Proposition~\ref{th2}.)
\begin{theorem}\label{thm5}
Let $(V,\setP,\setB)$ be a $\TD(k,n)$, $n\geq 3$, $k\geq 4$ with a proper 
embedding into $P^d(\D)$ on hyperplanes $H_1,H_2,\dots,H_k$.
Then $\dim(H_1\cap H_2\cap\dots\cap H_k)=d-2$, coordinates can be chosen such 
that $H_1,H_2,H_3$ are as in Proposition~\ref{prop3}, and there exist distinct 
$a_4,\dots,a_k\in\D_G\setminus\{0,1\}$ such that
\[ H_i\cap V = \setbuilder{[\vgamma,a_i,1]}{\vgamma\in G},\quad i=4,\dots,k,\]
where $G$ is a subgroup of $(\D^{d-1},+)$ of order $n$.
Furthermore, $n$ is a prime power $p^m$, $G$ is isomorphic to $\Z_p^m$, 
$\card{\D_G}=p^t$ for some $t\leq m$, and $k\leq\card{\D_G}+1$.

In particular, if a Latin square with transversal can be properly embedded 
into $P^d(\D)$, then the Latin square can be extended to a 
$\TD(\card{\D_G}+1,n)$ with an embedding that extends the original embedding.
\end{theorem}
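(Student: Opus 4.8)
The plan is to bootstrap from the three-hyperplane results already available. First I would apply Corollary~\ref{cor4}: since $(V,\setP,\setB)$ restricted to any three of the parts is a $\TD(3,n)$ that admits a transversal (the $\TD(k,n)$ with $k\geq 4$ certainly contains transversals—any block through a fixed point on $H_4$, intersected suitably, or more directly any fourth part acts as a transversal of the first three via the blocks), a proper embedding forces $\dim(H_1\cap H_2\cap H_3)=d-2$, and similarly every triple of hyperplanes meets in a $(d-2)$-flat. A short argument (all these $(d-2)$-flats must coincide, else two of them would span more than $\D^{d-1}$ worth of directions while the points $\fhi(V)$ avoid all pairwise intersections $H_P\cap H_Q$) then gives $\dim(H_1\cap\dots\cap H_k)=d-2$. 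Fix coordinates as in Proposition~\ref{prop3} using $H_1,H_2,H_3$, obtaining the subgroup $G\leq(\D^{d-1},+)$ of order $n$ and the stated coordinates for $H_1\cap V$, $H_2\cap V$, $H_3\cap V$.

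Next I would locate $H_i\cap V$ for $i\geq 4$. The key point is that for each such $i$, the part $P_i$ together with $P_2$ and $P_3$ forms a $\TD(3,n)$ lying on three concurrent hyperplanes, and—crucially—the points of $P_2$ and $P_3$ already have their coordinates pinned down by the choice made for $H_1,H_2,H_3$. So $H_i$ is determined: running the $\odot$-calculation of Proposition~\ref{prop3} with $P_i$ in the role of the first part shows $H_i$ has equation $x_d=a_i x_{d+1}$ and $H_i\cap V=\{[\vgamma,a_i,1]\mid\vgamma\in G\}$ for some $a_i\in\D$. The block condition—every block $\{X_1,\dots,X_k\}$ lies on a line—then says that for fixed $\vgamma$, the points $[\vgamma,0,1],[\vgamma,1,1],[\vgamma,a_4,1],\dots$ are collinear, which they automatically are (they all lie on the line $\{[\vgamma,t,1]\}$ through $[1,0,0]$); the real content is that these $a_i$ must lie in $\D_G$. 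This follows exactly as in the proof of Theorem~\ref{thm4}: projecting $H_i\cap V$ onto $H_1$ from a point, or using that the block structure between $P_i$ and $P_1$ must respect $G$, forces $Ga_i=G$, hence $a_i\in\D_G$; and $a_i\notin\{0,1\}$ since $H_i\neq H_1,H_2,H_3$. That the $a_i$ are pairwise distinct is immediate from $H_i\neq H_j$.

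For the numerical conclusions: by Corollary~\ref{cor3} (or the discussion preceding it) $\D$ has prime characteristic $p$, $G$ is an $\F_p$-subspace of $\D^{d-1}$, so $n=p^m$ with $G\cong\Z_p^m$; and $\D_G$ is a finite subfield, hence $\card{\D_G}=p^t$ with $G$ a vector space over $\D_G$, forcing $t\mid m$, in particular $t\leq m$. The bound $k\leq\card{\D_G}+1$ comes from the elements $a_1=0$ (for $H_1$), $a_2=1$ (for $H_2$)—wait, $H_3$ needs separate treatment since its equation is $x_{d+1}=0$, i.e.\ "$a_3=\infty$"—so the parts $H_1,H_2,H_4,\dots,H_k$ correspond to $k-1$ distinct elements $0,1,a_4,\dots,a_k$ of $\D_G$, giving $k-1\leq\card{\D_G}$, hence $k\leq\card{\D_G}+1$. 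Finally, the "in particular" statement follows by reversing the construction: given the embedded Latin square with transversal, Theorem~\ref{thm4} supplies $\D_G$ with $\card{\D_G}\geq 3$, and for each $a\in\D_G\setminus\{0,1\}$ the hyperplane $x_d=ax_{d+1}$ meets $\fhi(V)$-extended in a new part; together with the original three parts and reading off the blocks $\{[\vgamma,a,1]:a\in\D_G\cup\{\infty\}\}$, one checks directly that this is a $\TD(\card{\D_G}+1,n)$ whose embedding restricts to the given one.

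The main obstacle I anticipate is the very first step—rigorously showing all $k$ hyperplanes pass through a common $(d-2)$-flat, rather than merely pairwise or triple-wise intersecting in $(d-2)$-flats. One must use the embedding axioms carefully: no $\ell_B\subseteq H_P$, the points $\fhi(V)$ avoid $H_P\cap H_Q$, and a counting/dimension argument on how the lines $\ell_B$ meet the hyperplanes. Once the common $(d-2)$-flat is established, the rest is a disciplined repetition of the Proposition~\ref{prop3}/Theorem~\ref{thm4} machinery applied to each additional part, plus the elementary finite-field bookkeeping.
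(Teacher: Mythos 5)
Your overall strategy matches the paper's: establish that all $k$ hyperplanes share a $(d-2)$-flat, coordinatise $H_1,H_2,H_3$ via Proposition~\ref{prop3}, use Theorem~\ref{thm4} to place the remaining parts on hyperplanes $x_d=a_ix_{d+1}$ with $a_i\in\D_G\setminus\{0,1\}$, and then count. The second half of your argument (constancy of $a_i$ on each part, distinctness of the $a_i$, the finite-field bookkeeping including the bound $k-1\le\card{\D_G}$, and the extension to a $\TD(\card{\D_G}+1,n)$) is sound and essentially identical to the paper's.

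The genuine gap is in the first step, and you have misdiagnosed where it lies. You assert that ``a proper embedding forces $\dim(H_1\cap H_2\cap H_3)=d-2$'' by appealing to Corollary~\ref{cor4}, but Corollary~\ref{cor4} does not apply: its hypothesis is that the design is \emph{also} embeddable with a $(d-2)$-flat intersection, which is exactly what you are trying to prove. The relevant tool is Proposition~\ref{th2} (or Theorem~\ref{thm4}), and the difficulty is that properness is a hypothesis about the \emph{whole} of $V$, whereas Proposition~\ref{th2} only tells you that $V\cap(H_1\cup H_2\cup H_3)$, together with the transversal point, lies in a hyperplane. You must still argue that this forces \emph{all} of $V$ into that hyperplane before you can contradict properness of the $\TD(k,n)$ embedding. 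The paper does this by first proving that each $H_i\cap V$ spans $H_i$ (via a projection-from-a-transversal-point argument); an alternative completion is to note that every block meets $H_1\cup H_2\cup H_3$ in three distinct points, so every block line has two points in the offending hyperplane and hence lies entirely in it, whence every point of $V$ does too. Either way, this transfer of properness from the $\TD(k,n)$ to its $\TD(3,n)$ restrictions is the real content of the first step --- not, as you suggest in your closing paragraph, the coincidence of the various triple-wise $(d-2)$-flats, which is immediate: $H_1\cap H_2\cap H_i$ is a $(d-2)$-flat contained in the $(d-2)$-flat $H_1\cap H_2$, hence equal to it, so $H_1\cap H_2\subseteq H_i$ for every $i$.
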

\begin{proof}
We claim that each $H_i\cap V$ spans $H_i$.
Suppose to the contrary that $H_1\cap V$, say, spans a $(d-2)$-flat $F$.
Choose an arbitrary point $\infty\in V\setminus H_1$.
Without loss of generality, $\infty\in H_k$.
Then, since $\infty$ is a transversal point of the $\TD(k-1,n)$ lying on 
$H_1,\dots,H_{k-1}$, it follows that $V\cap(H_1\cup\dots\cup H_{k-1})$ lies 
on the hyperplane spanned by $F$ and $\infty$.
Similarly, $V\cap(H_1\cup H_3\cup H_4\cup\dots\cup H_{k})$ lies on the same 
hyperplane.
It follows that the whole of $V$ lies on a hyperplane, contrary to assumption.

By Theorem~\ref{thm4}, by taking any transversal point in $V$ not lying on 
three hyperplanes $H_i$, the intersection of any three hyperplanes is 
$(d-2)$-dimensional.
It follows that $\dim(H_1\cap\dots\cap H_k)=d-2$.

We may now choose coordinates such that $H_1, H_2, H_3$ are as in 
Proposition~\ref{prop3}.
By Theorem~\ref{thm4}, each point in $V\setminus(H_1\cup H_2\cup H_3)$ has 
coordinates $[\vgamma,a,1]$ with $\vgamma\in G$ and $a\in\D_G\setminus\{0,1\}$.
It remains to show for each $i=4,\dots,k$, that if 
$[\vgamma,a,1], [\vgamma',a',1]\in V\cap H_i$, then $a=a'$.
Since the $(d-2)$-flat $F=H_1\cap H_2\cap H_3\cap H_4$ also lies on the 
hyperplanes
$x_d=a x_{d+1}$ and $x_d=a' x_{d+1}$, $H_4$ is spanned by $F$ and $a$ and 
also by $F$ and $a'$.
This implies $a=a'$.
\end{proof}

\section*{Acknowledgment}
We thank the referees for their suggestions leading to an improved paper.


\begin{thebibliography}{10}
\bibitem{Amitsur}
S.~A.~Amitsur, \emph{Finite subgroups of division rings}, 
Trans.\ Amer.\ Math.\ Soc.\ \textbf{80} (1955), 361--386.

\bibitem{MR90h:51003}
E.~Artin, \emph{Geometric Algebra}, John Wiley \& Sons Inc., New York, NY, 1988. Reprint of the 1957 original.

\bibitem{BB}
L.~M.~Batten and A.~Beutelspacher, \emph{The Theory of Finite Linear Spaces}, 
Cambridge University Press, Cambridge, 1993.

\bibitem{BrCol}
A.~A.~Bruen and C.~J.~Colbourn, 
\emph{Transversal designs in classical planes and spaces}, 
J. Combin.\ Th.\ Ser.\ A \textbf{92} (2000), 88--94.

\bibitem{Handbook}
C.~J.~Colbourn and J.~H.~Dinitz (eds.), 
\emph{The CRC Handbook of Combinatorial Designs}, CRC Press, Boca Raton, 1996.

\bibitem{Coxeter}
H.~S.~M.~Coxeter, 
\emph{The binary polyhedral groups, and other generalizations of the quaternion group}, 
Duke Math.\ J. \textbf{7} (1940), 367--379. 

\bibitem{EPS}
N.~Elkies, L.~M.~Pretorius, K.~J.~Swanepoel, \emph{Sylvester-Gallai
  theorems for complex numbers and quaternions}, Discrete Comp.\
Geom.\ \textbf{35} (2006), 361--373. arXiv:math/0403023v3

\bibitem{Herstein}
I.~N.~Herstein, \emph{Finite multiplicative subgroups in division rings}, 
Pacific J. Math.\ \textbf{1} (1953), 121--126.

\bibitem{MR47:3207}
L.~M. Kelly and S.~Nwankpa, \emph{Affine embeddings of {S}ylvester-{G}allai
  designs}, J. Combinatorial Theory Ser. A \textbf{14} (1973), 422--438.

\bibitem{Lam}
T.~Y.~Lam, \emph{A first course in noncommutative rings}, Springer-Verlag, 
New York, 1991.

\bibitem{Lam2}
T.~Y.~Lam, \emph{Finite groups embeddable in division rings}, 
Proc.\ Amer.\ Math.\ Soc.\ \textbf{129} (2001), 3161--3166.

\bibitem{MR12:849c}
Th. Motzkin, \emph{The lines and planes connecting the points of a finite set},
  Trans. Amer. Math. Soc. \textbf{70} (1951), 451--464.

\bibitem{PS}
L.~M.~Pretorius, K.~J.~Swanepoel, 
\emph{The Sylvester-Gallai theorem, colourings and algebra}, 
Discrete Math.\ \textbf{309} (2009), 385--399. arXiv:math/0606131v1

\bibitem{Rigby}
J.~F.~Rigby, \emph{Affine subplanes of finite projective planes}, 
Canad.\ J. Math.\ \textbf{17} (1965), 997--1009.

\bibitem{Rotman}
J.~J.~Rotman, \emph{An introduction to the theory of groups}, 4th ed., 
Springer-Verlag, New York, 1995.

\bibitem{Szmielew}
W.~Szmielew, \emph{From affine to Euclidean geometry. An axiomatic approach}, 
D.~Reidel, Dordrecht, 1983.

\end{thebibliography}
\end{document}